\colorlet{darkblue}{blue!90!black}
\colorlet{darkred}{red!90!black}
\newtheorem{theorem}{Theorem}[section]
\newtheorem{lemma}[theorem]{Lemma}
\newtheorem{proposition}[theorem]{Proposition}
\newtheorem{corollary}[theorem]{Corollary}
\theoremstyle{definition}
\newtheorem{definition}[theorem]{Definition}
\newtheorem{remark}[theorem]{Remark}
\def\scal#1{\langle #1 \rangle}
\newcommand{\vn}[1]{{\vert\kern-0.3ex\vert\kern-0.3ex\vert #1 
    \vert\kern-0.3ex\vert\kern-0.3ex\vert}}
\newcommand{\wick}[1]{{:\kern-0.3ex #1 \kern-0.3ex:}}
\newcommand{\bn}[1]{{[\kern-0.5ex] #1 
    [\kern-0.5ex]}}
\colorlet{lightred}{red!60!white}
\colorlet{lightgrey}{black!60!white}
\colorlet{darkgreen}{green!70!black}
\newcommand\Z{\mathbb{Z}}
\newcommand\bone{\mathbf{1}}
\newcommand\cD{\mathcal{D}}
\newcommand\cH{\mathcal{H}}
\newcommand\cP{\mathcal{P}}
\newcommand\cF{\mathcal{F}}
\def\eps{\varepsilon}
\newcommand{\R}{\mathbb{R}}
\newcommand{\N}{\mathbb{N}}
\newcommand{\E}{\mathbb{E}}
\newcommand{\T}{\mathbb{T}}
\renewcommand{\P}{\mathbb{P}}
\newcommand{\bP}{\mathbb{P}}
\newcommand{\bH}{\mathbb{H}}
\newcommand{\bF}{\mathbb{F}}
\newcommand{\nosum}{}
\def\d{\partial}
\author{M\'at\'e Gerencs\'er}
\institute{TU Wien}
\begin{document}
\title{Analytically weak solutions to stochastic heat equations with spatially rough noise }
\maketitle
\begin{abstract}
In \cite{long} the authors showed existence and uniqueness of solutions to the nonlinear one-dimensional stochastic heat equation driven by a Gaussian noise that is white in time and rougher than white in space (in particular, its covariance is not a measure). Here we present a simple alternative to derive such results by considering the equations in the analytically weak sense, using either the variational approach or Krylov's $L^p$-theory. Various improvements are obtained as corollaries.
\end{abstract}

\section{Introduction}

Let $\xi$ be a $1+1$-dimensional space-time white noise and consider the nonlinear stochastic heat equation
\begin{equ}\label{eq:SHE}
\d_t u=\Delta u + \sigma(u)D^\gamma\xi,
\end{equ}
where $D$ is a differential operator of order $1$ (for most of the article it will be $D=(1-\Delta)^{\frac{1}{2}}$ with the periodic Laplacian $\Delta$).
We will take the parameter $\gamma$ to belong to $[0,\frac{1}{4})$, that is, the driving noise to be more singular in space than white.
A very similar equation was considered in \cite{long}, with a slightly different choice of ``white in time and rougher than white in space'' noise (the main parameter therein, $H$, corresponds to $1/2-\gamma$ in our setup).
The authors showed existence and uniqueness of strong solutions via Walsh's martingale measure approach to SPDEs \cite{Walsh}.
The property highlighted in \cite{long} as the main difficulty  is that for $\gamma>0$ the covariance of the noise, formally written 
\begin{equ}
\E\big(D^\gamma\xi(t,x)D^\gamma\xi(s,y)\big)=\delta(t-s)D^{2\gamma}\delta(x-y)
\end{equ}
is not a positive measure, which requires quite significant modifications of the classical theory  when tackling the equation with \cite{Walsh}.
The purpose of the present article is to demonstrate that this issue is much easier to circumvent if one formulates the equation in the (analytically) weak sense using either the variational approach \cite{Pardoux,KR_SEE} or the $L^p$-theory \cite{K_Lp}.

\begin{remark}
For convenience, we consider the equation on a compact domain.
This is one difference from \cite{long}, where the equation is considered on the real line.
We leave the extensions of the weak solution approach to infinite volume for future work.
\end{remark}

\subsection{Variational approach}
Initiated by \cite{Pardoux,KR_SEE}, the variational approach is the most commonly used theory for analytically weak solutions of SPDEs.
To verify whether an equation falls into its (rather large) scope, one only needs to check a couple of simple assumptions.
Doing so for \eqref{eq:SHE}, this approach yields a very simple proof of existence of (probabilistically) weak solutions in the full regime $\gamma<\frac{1}{4}$,
(probabilistically) strong existence and uniqueness in the case of linear noise $\sigma(u)=c_1+c_2 u$ also in the full regime $\gamma<\frac{1}{4}$,
and (probabilistically) strong existence and uniqueness in the nonlinear case in a more restrictive region
\begin{equ}\label{eq:gamma strange bound}
\gamma<\gamma_0:=\frac{5-\sqrt{21}}{4}=0.104356\ldots
\end{equ}
The linear case, fully covered by this approach, includes the example $\sigma(u)=u$, which is particularly relevant as the Cole-Hopf transform of the KPZ equation with rougher than white noise, studied also in \cite{Hoshino, Martin, Fabio}.

The restriction \eqref{eq:gamma strange bound} is of course quite unnatural and unsatisfactory and can be seen as an artifact of the $L^2$-based setting of the classical variational approach, ultimately losing regularity from Sobolev embeddings.
In order to overcome this restriction, we shall appeal to the $L^p$-theory of SPDEs \cite{K_Lp}, see Section \ref{sec:Lp} below.

To set up the result, consider \eqref{eq:SHE} on the torus $\T=\R/(2\pi\Z)$ (or equivalently, on the interval $[0,2\pi]$ with periodic boundary conditions) and on the fixed time horizon $[0,1]$.
Let $e^n(x)=\pi^{-\frac{1}{2}}\cos(nx)$ for $n> 0$, $e^n(x)=\pi^{-\frac{1}{2}}\sin(nx)$ for $n<0$, and $e^0(x)=(2\pi)^{-\frac{1}{2}}$. 
The sequence $(e^n)_{n\in\Z}$ is an orthonormal basis of $L^2(\T)$ and one has $-\Delta e^n=n^2e^n$.
The eigenvalues of $1-\Delta$ are denoted by $\scal{n}^2:=1+n^2$.
We denote by $\cD'(\T)$ the space of distributions on $\T$.
Define the Sobolev norms: for $f\in\cD'(\T)$, set
\begin{equ}
\|f\|_{H^\alpha}^2=\sum_{n\in\Z}\scal{n}^{2\alpha}|f(e^n)\big|^2.
\end{equ}
The space of distributions with finite $\|\cdot\|_{H^\alpha}$ norm is denoted by $H^\alpha$.
Of course $H^\alpha$ comes with a natural inner product, but it will be convenient to use more generally the notation
\begin{equ}
(f,g)_\alpha=\sum_{n\in\Z}\scal{n}^{2\alpha}f(e_n)g(e_n),
\end{equ}
which is well-defined whenever $f\in H^{\alpha+\beta}$ and $g\in H^{\alpha-\beta}$ for some $\beta\in\R$.
In the case $\alpha=\beta=0$ we will sometimes also use the notation $(f,g)_{L^2}$ to keep the notation instructive.
Powers of $1-\Delta$ are defined as the closures of the linear extensions of $e^n\mapsto (1-\Delta)^{\frac{\beta}{2}} e^n:=\scal{n}^\beta e^n$. The map $(1-\Delta)^{\frac{\beta}{2}}$ is an isometry from $H^\alpha$ to $H^{\alpha-\beta}$, for any $\alpha,\beta\in\R$.
Recall the elementary fact that for any $\alpha<\beta<\delta$, $\eps>0$, there exists a constant $C=C(\alpha,\beta,\delta,\eps)$ such that for all $x>0$ one has $x^\beta\leq\eps x^\delta+C x^\alpha$. As a consequence,
\begin{equ}\label{eq:interpolation}
\|f\|_{H^\beta}^2\leq \eps\|f\|_{H^{\delta}}^2+C\|f\|_{H^\alpha}^2.
\end{equ}
Some other common function spaces that we encounter are given by
\begin{equs}
f&\in C^\alpha(A)\quad &&\Leftrightarrow\quad \|f\|_{C^\alpha(A)}:=\sup_{x\in A}|f(x)|+\sup_{x\neq y\in A}\frac{|f(x)-f(y)|}{|x-y|^\alpha}<\infty\quad&&\alpha\in(0,1],
\\
f&\in C^\alpha(A)\quad &&\Leftrightarrow\quad\|f\|_{C^\alpha(A)}:=\sum_{k=0}^{\lceil\alpha\rceil-1}\|\d^k f\|_{C^{\alpha}(A)}<\infty\quad&&\alpha>1,
\\
g&\in L^p(A)\quad &&\Leftrightarrow\quad\|g\|_{L^p(A)}^p:=\int_{A}|g(x)|^p\,dx<\infty\quad&&p\in[1,\infty).
\end{equs}
The domain $A$ is often dropped from the notation. The space of continuous functions from $A$ to $B$ (with metric spaces $A$ and $B$) with the supremum norm is denoted by $C(A;B)$.
The notation $\subset$ between function spaces always denotes continuous embeddings.

Take a complete probability space $(\Omega,\cF,\P)$ and recall that (as far as an equation on $[0,1]\times \T$ is concerned) the white noise $\xi$ is an isometry from $L^2([0,1]\times \T)$ to $L^2(\Omega)$ such that each $\xi(\varphi)$ is Gaussian.
Define the processes $(W^n_t)_{t\in[0,1]}$ for $n\in\Z$ as the continuous modifications of the stochastic process $\big(\xi(\bone_{[0,t]}\otimes e^n)\big)_{t\in[0,1]}$. It is standard that such continuous modifications exist and $(W^n)_{n\in\Z}$ is a sequence of mutually independent Brownian motions.
Let $\bF=(\cF_t)_{t\in[0,1]}$ be a complete filtration such that each $W^n$ is an $\bF$-Brownian motion.
Let $\cP$ be the predictable $\sigma$-algebra on $\Omega\times[0,1]$ based on $\bF$.
As we will always work with analytically weak solutions, in the sequel the notions ``strong'' and ``weak'' refer to the distinction of solution concepts in the probabilistic sense. They are defined as follows. The choice of the $\mu$ is postponed, for now it may be taken as an arbitrary real parameter.

\begin{definition}\label{def:SHE}
A strong solution to \eqref{eq:SHE} with initial condition $\psi\in H^{\mu-1}$ is
a $\bF$-adapted $H^{\mu-1}$-valued continuous stochastic process $u$ that belongs to $L^2(\Omega\times[0,1]; H^{\mu})$, such that for all $v\in H^{\mu}$ each expression in the equality
\begin{equ}\label{eq:SHE-def-weak}
(u_t,v)_{\mu-1}=(\psi,v)_{\mu-1}+\int_0^t(\Delta u_s,v)_{\mu-1}\,ds
+\sum_{n\in\Z}\int_0^t(\sigma(u_s)\scal{n}^\gamma e^n,v)_{\mu-1}\,dW^n_s
\end{equ}
is well-defined almost surely for all $t\in[0,1]$ and the equality holds.
\end{definition}
\begin{definition}\label{def:SHE-weak}
A weak solution to \eqref{eq:SHE} with initial condition $\psi\in H^{\mu-1}$ is a
collection $\{(\bar\Omega,\bar\cF,\bar\bP),\bar\bF,(\bar W^n)_{n\in\N},\bar u\}$ such that
$(\bar\Omega,\bar\cF,\bar\bP)$ is a complete probability space,
$\bar\bF$ is a complete filtration of $\bar\cF$,
$(\bar W^n)_{n\in\N}$ is a sequence of independent $\bar\bF$-Brownian motions,
$\bar u$ is a $\bar\bF$-adapted $H^{\mu-1}$-valued continuous stochastic process that belongs to $L^2(\bar \Omega\times[0,1]; H^{\mu})$, such that for all $v\in H^{\mu}$ each expression in the equality
\begin{equ}\label{eq:SHE-def-WWweak}
(\bar u_t,v)_{\mu-1}=(\psi,v)_{\mu-1}+\int_0^t(\Delta \bar u_s,v)_{\mu-1}\,ds
+\sum_{n\in\Z}\int_0^t(\sigma(\bar u_s)\scal{n}^\gamma e^n,v)_{\mu-1}\,d\bar W^n_s
\end{equ}
is well-defined $\bar \bP$-almost surely for all $t\in[0,1]$ and the equality holds.
\end{definition}

The main theorems using the variational approach read as follows.

\begin{theorem}\label{thm:main-weak}
Let $\gamma\in[0,\frac{1}{4})$ and $\mu\in(\gamma,\frac12-\gamma)$. Suppose that $\psi\in H^{\mu-1}$ and that $\sigma\in C^{\frac{\gamma}{\mu}+\eps_1}$ with some $\eps_1>0$. Then there exists a weak solution to \eqref{eq:SHE} with initial condition $\psi$.
\end{theorem}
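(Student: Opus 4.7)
The plan is to realize \eqref{eq:SHE} as an evolution equation in the Gelfand triple $H^\mu \subset H^{\mu-1} \subset H^{\mu-2}$ and appeal to the classical variational theory of \cite{Pardoux,KR_SEE}. The drift $A(u) = \Delta u$ is a bounded linear operator $H^\mu \to H^{\mu-2}$, while the noise is encoded in $B(u):\ell^2(\Z) \to H^{\mu-1}$ defined by $(h^n)_n \mapsto \sum_n h^n \sigma(u) \scal{n}^\gamma e^n$. Since $\sigma$ is only Hölder, monotonicity is unavailable, but probabilistically \emph{weak} existence only requires coercivity together with continuity of the coefficients, so this is what I would verify.

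The deterministic part of the coercivity estimate is immediate from the spectral representation:
\begin{equ}
2(Au, u)_{\mu-1} = -2\|u\|_{H^\mu}^2 + 2\|u\|_{H^{\mu-1}}^2.
\end{equ}
The heart of the matter is the Hilbert--Schmidt norm
\begin{equ}
\|B(u)\|_{\HS}^2 = \sum_{n \in \Z} \scal{n}^{2\gamma} \|\sigma(u)e^n\|_{H^{\mu-1}}^2.
\end{equ}
Multiplication by $e^n$ amounts, up to constants, to a Fourier shift by $\pm n$, so expanding the $H^{\mu-1}$-norms in the trigonometric basis and interchanging summations gives
\begin{equ}
\|B(u)\|_{\HS}^2 \lesssim \sum_{k \in \Z} |\widehat{\sigma(u)}(k)|^2 \sum_{n \in \Z} \scal{n}^{2\gamma}\scal{n-k}^{2(\mu-1)}.
\end{equ}
Splitting the inner sum into $|n| \leq 2|k|$ and $|n| > 2|k|$ shows that it is bounded by $C\scal{k}^{2\gamma}$, the convergence of the tail region requiring precisely the assumption $\gamma + \mu < 1/2$. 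Thus $\|B(u)\|_{\HS}^2 \lesssim \|\sigma(u)\|_{H^\gamma}^2$, and the composition bound
\begin{equ}
\|\sigma(u)\|_{H^\gamma}^2 \lesssim 1 + \|u\|_{H^\mu}^{2\alpha},\qquad \alpha := \gamma/\mu + \eps_1,
\end{equ}
follows from the Gagliardo double-integral characterisation of $H^\gamma$: the Hölder estimate $|\sigma(u(x))-\sigma(u(y))| \leq \|\sigma\|_{C^\alpha}|u(x)-u(y)|^\alpha$, followed by Hölder's inequality with exponents $1/\alpha$ and $1/(1-\alpha)$, reduces everything to a deterministic double integral that converges exactly when $\alpha\mu > \gamma$. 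Since $2\alpha < 2$, Young's inequality absorbs the resulting $\|u\|_{H^\mu}^{2\alpha}$ into the dissipation, producing the coercivity
\begin{equ}
2(Au, u)_{\mu-1} + \|B(u)\|_{\HS}^2 \leq -\|u\|_{H^\mu}^2 + K(1+\|u\|_{H^{\mu-1}}^2).
\end{equ}

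Granted coercivity together with the obvious continuity of $u \mapsto B(u)$ in the Hilbert--Schmidt topology (inherited from continuity of $\sigma$ via the same composition bound), existence of a weak solution proceeds by a standard Galerkin--Skorokhod scheme: projecting drift and noise onto $\span\{e^n : |n| \leq N\}$ yields a finite-dimensional SDE with bounded continuous coefficients, whose weak solvability is classical; Itô's formula for $\|u^N\|_{H^{\mu-1}}^2$ combined with the coercivity estimate provides uniform bounds in $L^2(\Omega; C([0,1]; H^{\mu-1})) \cap L^2(\Omega\times[0,1]; H^\mu)$; a Kolmogorov-type time-regularity bound and the compact embedding $H^\mu \hookrightarrow H^{\mu-1}$ yield tightness of the laws of $(u^N,(W^n)_n)$; and Skorokhod's representation plus continuity of $\sigma$ allow one to pass to the limit in the weak formulation and identify the limit as a weak solution.

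The main obstacle is the coercivity estimate itself: the three parameters — the noise roughness $\gamma$, the test-space exponent $\mu \in (\gamma,1/2-\gamma)$, and the Hölder index $\gamma/\mu + \eps_1$ of $\sigma$ — are in a precisely calibrated mutual balance, each pushed to the boundary of what makes the double Fourier sum and the subsequent Gagliardo integral converge. Once this borderline estimate is secured, the remaining argument is textbook variational theory.
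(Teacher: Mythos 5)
Your proposal follows exactly the paper's route: the Gelfand triple $H^\mu\subset H^{\mu-1}\subset H^{\mu-2}$, coercivity reduced to the Hilbert--Schmidt bound $\|B(u)\|_{\HS}^2\lesssim\|\sigma(u)\|_{H^\gamma}^2$ under $\mu+\gamma<\tfrac12$, the Slobodeckij/H\"older composition estimate $\|\sigma(u)\|_{H^\gamma}\lesssim 1+\|u\|_{H^{\beta}}^{\gamma/\mu+\eps_1}$ with $\beta<\mu$ (this is exactly the condition $\alpha\mu>\gamma$ you isolate), interpolation to absorb into the dissipation, and a Galerkin--Skorokhod compactness scheme for the weak limit. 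The only cosmetic difference is the derivation of the key HS bound: you expand both sums in Fourier and estimate the discrete convolution kernel $\sum_n\scal{n}^{2\gamma}\scal{n-k}^{2(\mu-1)}\lesssim\scal{k}^{2\gamma}$ directly, whereas the paper swaps the roles of the two trigonometric indices via the symmetry $(we^n,e^m)_{L^2}=(we^m,e^n)_{L^2}$ and then invokes the paramultiplication estimate $\|we^m\|_{H^\gamma}\lesssim\|w\|_{H^\gamma}\|e^m\|_{C^{\gamma+\eps}}$; both yield the identical intermediate inequality, so this is a stylistic rather than substantive divergence.
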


\begin{theorem}\label{thm:main-strong}
Let either of the following hold:
\begin{enumerate}[(i)]
\item $\gamma\in[0,\frac{1}{4})$, $\mu\in(\gamma,\frac12-\gamma)$, and $\sigma(u)=c_1+c_2u$ for some constants $c_1,c_2\in\R$;
\item $\gamma\in[0,\gamma_0)$, $\mu,q$ are as in Proposition \ref{prop:exponents}, and $\sigma\in C^{1+\frac{1}{q}}$.
\end{enumerate}
Let furthermore $\psi\in H^{\mu-1}$. Then there exists a unique strong solution to \eqref{eq:SHE} with initial condition $\psi$.
\end{theorem}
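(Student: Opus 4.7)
The plan is to put \eqref{eq:SHE} into the classical variational framework of Pardoux and Krylov--Rozovskii on the Gelfand triple $H^{\mu}\subset H^{\mu-1}\subset H^{\mu-2}$, with drift $A(u)=\Delta u$ and diffusion $B(u)(h_n)_n:=\sum_n h_n\sigma(u)\scal{n}^\gamma e^n$. Once measurability, hemicontinuity, linear growth, coercivity, and monotonicity are verified, existence and uniqueness of a strong solution in the required class follow from the abstract theorem of \cite{Pardoux,KR_SEE}. The central analytic input is the Hilbert--Schmidt bound
\begin{equ}
\|B(u)\|_{HS}^2=\sum_{n\in\Z}\|\sigma(u)\scal{n}^\gamma e^n\|_{H^{\mu-1}}^2\lesssim \|\sigma(u)\|_{H^\gamma}^2,
\end{equ}
and its analogue $\|B(u)-B(v)\|_{HS}^2\lesssim \|\sigma(u)-\sigma(v)\|_{H^\gamma}^2$ for differences. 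Both follow by expanding $e^n e^m$ in the Fourier basis and invoking the convolution estimate $\sum_n \scal{n}^{2\gamma}\scal{n-k}^{2(\mu-1)}\lesssim \scal{k}^{2\gamma}$, which converges precisely because $\gamma+\mu<1/2$. Combined with the drift identity $(\Delta u,u)_{\mu-1}\leq -c\|u\|_{H^\mu}^2+C\|u\|_{H^{\mu-1}}^2$, the task reduces to estimating $\|\sigma(u)\|_{H^\gamma}$ and $\|\sigma(u)-\sigma(v)\|_{H^\gamma}$.

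In case (i), linearity immediately gives $\|\sigma(u)\|_{H^\gamma}\lesssim 1+\|u\|_{H^\gamma}$ and $\|\sigma(u)-\sigma(v)\|_{H^\gamma}\lesssim \|u-v\|_{H^\gamma}$. Applying the interpolation \eqref{eq:interpolation} with $\alpha=\mu-1$, $\beta=\gamma$, $\delta=\mu$ --- admissible precisely because $\mu-1<\gamma<\mu$ --- absorbs the noise contribution into the drift, closing both the coercivity and the monotonicity inequalities of the variational framework and yielding the unique strong solution.

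In case (ii), the skeleton is identical but $\|\sigma(u)\|_{H^\gamma}$ and its difference version now require a nonlinear composition estimate, for which $\sigma\in C^{1+1/q}$ provides the right amount of regularity. Since in one dimension the Sobolev embedding of $H^\mu$ with $\mu<1/2$ falls short of $L^\infty$, such a bound must be combined with a Gagliardo--Nirenberg interpolation before it can be inserted into \eqref{eq:interpolation}; the resulting compatibility of the Lebesgue exponent with $\mu$ and $\gamma$ is the content of Proposition \ref{prop:exponents} and produces the threshold $\gamma<\gamma_0=(5-\sqrt{21})/4$. The main obstacle I anticipate is exactly this three-way balance between composition estimate, Sobolev embedding, and interpolation: any slack forces a stricter bound on $\mu$ and, through the embeddings, on $\gamma$, so it is the tightness of the $L^2$-based scheme rather than any conceptual step that pins the variational approach at the threshold $\gamma_0$.
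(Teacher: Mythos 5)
Your overall plan — Gelfand triple $H^\mu\subset H^{\mu-1}\subset H^{\mu-2}$, verify the standard variational conditions, reduce everything to $\|\sigma(u)\|_{H^\gamma}$ via the Hilbert--Schmidt bound, interpolate — is the approach taken in the paper, and case~(i) as you describe it is essentially complete. Your derivation of $\sum_n\|w\scal{n}^\gamma e^n\|_{H^{\mu-1}}^2\lesssim\|w\|_{H^\gamma}^2$ by a direct Fourier-convolution estimate is a valid alternative to the paper's device (resumming over $m$ and $n$ to reduce to a bound on $\|we^m\|_{H^\gamma}$, a trick taken from Krylov's $L^p$-theory); both hinge on $\gamma+\mu<1/2$.

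For case~(ii), however, there are two concrete gaps. First, you invoke ``the abstract theorem of \cite{Pardoux,KR_SEE},'' but that theorem requires the monotonicity constant to be \emph{uniform} in $u,v$. When $\sigma$ is nonlinear, the best one can extract is a bound of the form
\begin{equ}
\sum_n\big\|(\sigma(u)-\sigma(v))\scal{n}^\gamma e^n\big\|_{H^{\mu-1}}^2\le 2\|u-v\|_{H^\mu}^2+K\|u-v\|_{H^{\mu-1}}^2\big(1+\|u\|_{H^\mu}^{\alpha_0}\big)
\end{equ}
with $\alpha_0<2$, i.e.~\emph{local} monotonicity with polynomial growth in $\|u\|_V$. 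The classical framework does not cover this; one must use a locally monotone variant such as \cite{LR,Neelima} (the paper uses \cite{Neelima}, see Remark~\ref{rem:loc-mon}). Second, you say the nonlinear case needs ``a composition estimate plus Gagliardo--Nirenberg,'' but you never state the actual mechanism. The paper's key step is the second-difference estimate
\begin{equ}
|\sigma(a)-\sigma(b)-\sigma(c)+\sigma(d)|\le\|\sigma\|_{C^1}|a-b-c+d|+\|\sigma\|_{C^{1+1/q}}|a-b|\,|a-c|^{1/q},
\end{equ}
applied with $a=u(x),b=v(x),c=u(y),d=v(y)$ inside the Slobodeckij seminorm of $\sigma(u)-\sigma(v)$. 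This is what splits the contribution into a linear piece (handled by interpolation) and a mixed piece $J$ involving both $|u-v|$ and $|u(x)-u(y)|^{1/q}$; only then does H\"older with exponents $p,q$ plus the Sobolev embedding $H^\kappa\subset L^{2p}$ plus interpolation $\|\cdot\|_{H^\kappa}\le\|\cdot\|_{H^\mu}^\theta\|\cdot\|_{H^{\mu-1}}^{1-\theta}$ (the three items of Proposition~\ref{prop:exponents}) produce $\alpha_0<2$ and the threshold $\gamma_0$. Without this inequality for $\sigma$, the plain composition estimate $\|\sigma(u)-\sigma(v)\|_{H^\gamma}\lesssim\dots$ doesn't have a version that isolates $\|u-v\|$ from the H\"older-bump of $u$, and there is no obvious way to close the monotonicity bound.
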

\begin{remark}\label{rem:threshold}
Considering \eqref{eq:SHE} in the weak form gives yet another perspective for the threshold $\frac{1}{4}$, also appearing in the martingale measure solution theory \cite{long} and regularity structures \cite{Hoshino}: in the present context this boils down to the existence of 
$\mu$ that satisfies the inequalities $\mu<\frac{1}{2}-\gamma$ and $\mu>\gamma$ which are used for the first and second inequalities in
\begin{equ}
\sum_{n\in\N}\|v \scal{n}^\gamma e^n\|_{H^{\mu-1}}^2\lesssim \|v\|_{H^\gamma}^2\leq \eps \|v\|_{H^{\mu}}^2+ C(\gamma,\mu,\eps)\|v\|_{H^{\mu-1}}^2
\end{equ}
for any $\eps>0$, see \eqref{eq:krylov-type} below. The necessity for such a bound arises naturally to get energy estimates for the equation.
What happens beyond the $\frac{1}{4}$ threshold is the subject of the recent papers \cite{Martin, Fabio} (both formulated in terms of the KPZ equation, obtained by taking $\sigma(u)=u$ and formally taking logarithm).
\end{remark}

The variational approach offers several advantages.
\begin{enumerate}[(1)]
\item \emph{Short proof}. The probabilistically strong well-posedness of \eqref{eq:SHE} in the variational framework boils down to a back-of-the-envelope verification of a few assumptions in \cite{KR_SEE} (more precisely, the variant \cite{Neelima} is more suitable, see Remark \ref{rem:loc-mon}).

\item \emph{Other boundary conditions}. The simplicity of the proof allows one to easily extend the result to other settings. For example, consider the case of homogeneous Dirichlet boundary conditions.
This amounts to changing the orthonormal basis of $L^2$ that we work with:
for $n\in\N=\{1,2,\ldots\}$ let $\bar e^n(x)=(\pi)^{-\frac{1}{2}}\sin(2^{-1}nx)$. We can then define the Dirichlet Sobolev spaces by the norms
\begin{equ}
\|f\|_{H^\alpha_0}^2:=\sum_{n\in\N}n^{2\alpha}|f(\bar e^n)\big|^2.
\end{equ}
The analogue of Theorems \ref{thm:main-weak}-\ref{thm:main-strong} for the Dirichlet case can then be obtained by essentially the same argument. We leave the details to the interested reader.

\item \emph{Quanitative approximation}. From the theory one can also easily extract that if one takes appropriate smooth approximations of the noise, then the approximate solutions converge strongly, and this convergence can be also made quantitative.
Such ``appropriate'' approximations include, for example, spatial mollification or spectral truncation of $\xi$.

For illustration purposes, we formulate the approximation results for the linear equation.
Let $\rho:\R\to\R$ be a smooth, nonnegative, even function with integral $1$ and support in the unit ball and for $\eps>0$ let $\rho^\eps(x)=\eps^{-1}\rho(\eps^{-1}x)$.
\begin{corollary}\label{cor:strong-approx}
Let $\gamma\in[0,\frac{1}{4})$, $\mu\in(\gamma,\frac{1}{2}-\gamma)$, and $\kappa\in(0, \frac{1}{2}-\gamma-\mu)$. Let $\psi\in H^{-\frac{1}{2}-\gamma}$.
For $\eps\in(0,1]$ and $N\in\N$ denote by $v^{(\eps)}$ and $v^{[N]}$ the solutions of
\begin{equs}
dv^{(\eps)}_t&=\Delta v^{(\eps)}_t\,dt+\sum_{n\in\Z}v^{(\eps)}_t\scal{n}^\gamma(\rho_\eps\ast e^n)\,dW^n_t,
\\
dv^{[N]}_t&=\Delta v^{[N]}_t\,dt+\sum_{|n|\leq N}v^{[N]}_t\scal{n}^\gamma e^n\,dW^n_t,
\end{equs}
both with initial condition $\psi$. Then there exists a constant $C=C(\gamma,\mu,\kappa)$ such that for all $N\in\N$ and $\eps\in(0,1)$ one has
\begin{equ}
\sup_{t\in[0,1]}\E\|u_t-v^{[N]}_t\|_{H^{\mu-1}}^2+\E\int_0^1\|u_s-v^{[N]}_s\|_{H^\mu}^2\leq C N^{-\kappa},
\end{equ}
\begin{equ}
\sup_{t\in[0,1]}\E\|u_t-v^{(\eps)}_t\|_{H^{\mu-1}}^2+\E\int_0^1\|u_s-v^{(\eps)}_s\|_{H^\mu}^2\leq C \eps^{\kappa}.
\end{equ}
\end{corollary}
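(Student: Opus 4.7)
The plan is to treat each of $w^{[N]}:=u-v^{[N]}$ and $w^{(\eps)}:=u-v^{(\eps)}$ as the solution of a linear stochastic heat equation with zero initial condition and an inhomogeneous driver encoding the truncation or mollification error, and to run the same It\^o energy estimate underlying the variational proof of Theorem~\ref{thm:main-strong}(i). In the spectral case,
\begin{equ}
dw^{[N]}_t=\Delta w^{[N]}_t\,dt+\sum_{|n|\leq N}w^{[N]}_t\scal{n}^\gamma e^n\,dW^n_t+\sum_{|n|>N}u_t\scal{n}^\gamma e^n\,dW^n_t.
\end{equ}
In the mollified case, the identity $\rho_\eps\ast e^n=\hat\rho(n\eps)\,e^n$ (valid on both sines and cosines since $\rho$ is even) yields the analogous decomposition with homogeneous coefficient $\hat\rho(n\eps)\scal{n}^\gamma$ and an inhomogeneous piece of coefficient $(1-\hat\rho(n\eps))\scal{n}^\gamma$ multiplying $u$.

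Applying It\^o's formula to $\|w_t\|_{H^{\mu-1}}^2$, expanding $(w,\Delta w)_{\mu-1}=-\|w\|_{H^\mu}^2+\|w\|_{H^{\mu-1}}^2$, and invoking the coercivity estimate of Remark~\ref{rem:threshold} with a small parameter to absorb the homogeneous quadratic-variation contribution into the dissipation (noting $|\hat\rho|\leq 1$ so the same coercivity applies in the mollified case), I take expectations and apply Gronwall to reduce both claims to
\begin{equs}
\E\int_0^1\sum_{|n|>N}\scal{n}^{2\gamma}\|u_s e^n\|_{H^{\mu-1}}^2\,ds&\lesssim N^{-\kappa},
\\
\E\int_0^1\sum_{n\in\Z}(1-\hat\rho(n\eps))^2\scal{n}^{2\gamma}\|u_s e^n\|_{H^{\mu-1}}^2\,ds&\lesssim \eps^\kappa.
\end{equs}

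The main input is a weighted extension of Remark~\ref{rem:threshold}: the same Fourier-shift argument gives, for any $\alpha$ with $\alpha+\mu<\tfrac12$,
\begin{equ}\label{eq:weighted-prop}
\sum_{n\in\Z}\scal{n}^{2\alpha}\|ve^n\|_{H^{\mu-1}}^2\lesssim \|v\|_{H^\alpha}^2.
\end{equ}
The strengthened hypothesis $\psi\in H^{-\frac12-\gamma}$ allows me to apply Theorem~\ref{thm:main-strong}(i) at any level $\mu'\in(\gamma,\tfrac12-\gamma)$ and, by uniqueness, identify the resulting solution with $u$; thus $u\in L^2(\Omega\times[0,1];H^{\mu'})$ for every such $\mu'$. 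Choosing $\alpha=\gamma+\kappa/2$ (so $\alpha+\mu<\tfrac12$ thanks to $\kappa<\tfrac12-\gamma-\mu$), combining $\scal{n}^{2\gamma}\leq N^{-\kappa}\scal{n}^{2\alpha}$ on $\{|n|>N\}$ with \eqref{eq:weighted-prop} yields the spectral bound. For the mollified case, the elementary inequality $(1-\hat\rho(n\eps))^2\lesssim 1\wedge(\scal{n}\eps)^2\leq(\scal{n}\eps)^\kappa$ (using that $\rho$ is smooth and even so $\hat\rho(0)=1$ and $\hat\rho$ is Schwartz) combined with \eqref{eq:weighted-prop} at the same $\alpha$ gives the $\eps^\kappa$ bound.

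The main obstacle is the verification of \eqref{eq:weighted-prop}, but this is essentially the same bilinear estimate hidden in Remark~\ref{rem:threshold}: after passing to the Fourier side, it reduces to showing $\sup_m\scal{m}^{-2\alpha}\sum_n\scal{n}^{2\alpha}\scal{m\pm n}^{2(\mu-1)}\lesssim 1$, which follows by splitting into $|n|\leq 2|m|$ and $|n|>2|m|$ and using $\alpha+\mu<\tfrac12$ to close the summation. The It\^o/Gronwall step and the upgrade of solution regularity of $u$ via the stronger initial data are routine within the variational framework.
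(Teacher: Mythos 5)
Your proof is correct and follows the same skeleton as the paper's: write the linear SPDE for the error $w=u-v^{[N]}$ (resp.\ $u-v^{(\eps)}$), run the It\^o energy estimate in $H^{\mu-1}$, absorb the homogeneous quadratic variation into the dissipation via the weighted estimate \eqref{eq:krylov-type} together with interpolation, upgrade the regularity of $u$ by invoking Theorem~\ref{thm:main-strong}(i) at a shifted level (which the hypothesis $\psi\in H^{-\frac12-\gamma}$ permits), and close with Gronwall. The spectral case is verbatim the paper's argument (the paper takes $\alpha=\gamma+\kappa$, you take $\gamma+\kappa/2$; either works since $\kappa<\tfrac12-\gamma-\mu$).

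The one place where you genuinely diverge is the mollification error. The paper estimates the inhomogeneous term by commuting $(1-\Delta)^{\gamma/2}$ with the convolution and then using $\|(\delta-\rho_\eps)\ast w\|_{L^2}\lesssim\eps^\kappa\|w\|_{H^\kappa}$, applying the rearrangement trick afterwards. You instead observe that, because $\rho$ is even and supported in the unit ball, on the torus $\rho_\eps\ast e^n=\hat\rho(n\eps)\,e^n$, so the mollification is diagonal in the Fourier basis; the error then has multiplier $(1-\hat\rho(n\eps))\scal{n}^\gamma$ and is controlled by $(1-\hat\rho(n\eps))^2\lesssim(\scal{n}\eps)^\kappa$, reducing both cases to a single application of the weighted estimate. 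This is a tidy simplification that unifies the two error bounds, and it additionally makes transparent that $|\hat\rho|\le1$ gives the homogeneous coercivity for free. Both routes are sound; yours is arguably more elementary to state, while the paper's convolution argument is the one that generalizes directly to the non-spectral ($L^p$) setting of Remark~\ref{rem:apriori-Lp-eps}.
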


\item \emph{Milder conditions on the data}. Whereas \cite{long} assumes $\partial_x\sigma$ to be Lipschitz and $\sigma(0)=0$ (removing the latter condition is possible but nontrivial \cite{Hu2022}), we require somewhat less: we need $\partial_x\sigma$ only to have certain H\"older continuity, without imposing anything on $\sigma(0)$.

Concerning the initial condition, \cite{long} assumes $\psi\in B^{\gamma}_{p,2}$ with $p>\frac{6}{1-4\gamma}$ for uniqueness and somewhat more for existence (see \cite[Sec.~4.2]{long} for details).
In contrast, Theorem \ref{thm:main-strong} only assumes $\psi\in H^{\mu-1}$.
For example, a frequently studied initial condition in the context of parabolic Anderson model is the Dirac-mass $\delta$, which belongs to $H^\alpha$ for any $\alpha<-\frac{1}{2}$, and so it is included in our formulation.
\item \emph{Quasilinear and degenerate quasilinear equations}. A mild formulation relies on a good control on the semigroup associated to the leading order operator and is therefore difficult to extend to quasilinear settings.
With the variational approach this seems perfectly within scope, and in the space-time white case (i.e. $\gamma=0$) this is detailed in \cite{DGG} and \cite{BGN}.

\end{enumerate}

\begin{remark}
Given that for a large class of SPDEs their weak and mild formulations are equivalent, one might wonder if the mild solution theory of \cite{DPZ} also applies to \eqref{def:SHE}.
Note that the time integral of $D^\gamma\xi$ is a cylindrical Wiener process on $H^{-\gamma}$.
Therefore to apply \cite[Thm.~7.5]{DPZ}, one needs to verify \cite[Hyp.~7.2]{DPZ}, with $U=U_0=H^{-\gamma}$ and an appropriate Hilbert space $H$. However, \cite[Hyp.~7.2]{DPZ} (iii) can not be satisfied: even in the linear case there is no function space $H$ containing the smooth functions such that for all $h\in H$ the mapping $u\mapsto h u$ belongs to $L(U,H)$. Indeed, taking $h\equiv 1$ one sees that  $U$ has to be continuously embedded in $H$, but for generic $h,u\in U$ the product $hu$ is not well-defined.

However, the other condition in \cite[Hyp.~7.2]{DPZ} (iii) is closely related to the explanation of the threshold $\frac{1}{4}$ in Remark \ref{rem:threshold}: 
with the choice $H=H^\mu$, and sufficiently regular (say, Lipschitz) $\sigma$ the bound
\begin{equ}
\Big\|t\mapsto \|u\mapsto e^{\Delta t} (\sigma(h) u)\|_{L_{\mathrm{HS}}(U,H)}\Big\|_{L^2([0,1])}\leq C(1+\|h\|_{H})
\end{equ}
is satisfied if and only if $\mu\in(\gamma,\frac{1}{2}-\gamma)$.
The verification of this is similar to \eqref{eq:krylov-type} and we leave it to the reader.
Note however that on the left-hand side $e^{\Delta t}$ is applied  \emph{not} to elements of $H$, which is somewhat different from the usual procedure of the mild solution theory.
\end{remark}

\subsection{$L^p$-theory}\label{sec:Lp}
Interestingly, the possibility of taking a noise that is rougher in space than white was already mentioned $25$ years ago as a remark in Krylov's $L^p$-theory \cite[Rem.~8.9]{K_Lp}. This remark makes the guess for the range of possible exponents to be $\gamma<\frac{1}{2}$, which is the same incorrect guess that one would get by naive scaling arguments, see e.g. \cite[Sec.~1]{Hoshino}. Nevertheless, the ``message'' of \cite[Rem.~8.9]{K_Lp} is  true: $L^p$-theory can indeed be used used to prove strong well-posedness of \eqref{eq:SHE}, and one can even remove the condition \eqref{eq:gamma strange bound}.

\begin{remark}
Strictly speaking, the theory in \cite{K_Lp} is only formulated for equations in $\R$. Since the case of the torus can only ever be easier, we take the liberty of taking all results in \cite{K_Lp} valid on $\T$ as well.
\end{remark}


For $\alpha\in\R$ and $p\in[2,\infty)$, the completion of the space of smooth functions with respect to the norm $\|f\|_{H^{\alpha,p}}:=\|(1-\Delta)^{\frac{\alpha}{2}}\|_{L^p(\T)}$ is denoted by $H^{\alpha,p}$. Similarly, the completion of sequences of smooth functions $f=(f^k)_{k\in\Z}$ with finitely many nonzero coordinates with respect to the norm $\|f\|_{H^{\alpha,p}_{\ell^2}}:=\big\|\|(1-\Delta)^{\frac{\alpha}{2}}f\|_{\ell^2}\big\|_{L^p(\T)}$ is denoted by $H^{\alpha,p}_{\ell^2}$. In the latter case $(1-\Delta)^{\frac{\alpha}{2}}$ is understood to act component-wise.
By $\bH^{\alpha,p}$ we denote the space $L^p(\Omega\times[0,1];\cP;H^{\alpha,p})$ and similarly $\bH^{\alpha,p}_{\ell^2}$.
One has the natural analogue of \eqref{eq:interpolation} from general interpolation theory (see e.g. \cite{Triebel}): for any $\alpha<\beta<\delta$, $\eps>0$, there exists a constant $C=C(\alpha,\beta,\delta,\eps)$ such that for all $x>0$ one has
\begin{equ}\label{eq:interpolationp}
\|f\|_{H^{\beta,p}}\leq \eps\|f\|_{H^{\delta,p}}+C\|f\|_{H^{\alpha,p}}.
\end{equ}
After integrating in time and $\omega$ one gets the same inequality for the $\bH^{\alpha,p}$ spaces.

The solution space is defined as follows: we say that $u\in \cH^{\alpha,p}$ if $u$ is a $\cD'(\T)$-valued random process such that $\Delta u\in\bH^{\alpha-2,p}$, $u_0\in L^p(\Omega;\cF_0;H^{\alpha-\frac{2}{p},p})$, and there exist $f\in\bH^{\alpha-2,p}$, $g\in\bH^{\alpha-1,p}_{\ell^2}$ such that for all smooth test function $\varphi$
\begin{equ}
u_t(\varphi)=u_0(\varphi)+\int_0^t f_s(\varphi)\,ds+\sum_{n\in\Z}\int_0^tg^n_s(\varphi)\,dW^n_s
\end{equ}
holds for all $t\in[0,1]$ with probability $1$. Note that the pair $(f,g)$, if exists, is uniquely determined by $u$ (see \cite[Rem.~3.3]{K_Lp}). The norm on $\cH^{\alpha,p}$ is defined by summing up the appropriate norms of $\Delta u$, $u_0$, $f$, and $g$ (by the previous remark, this quantity is well-defined).
For any stopping time $\tau$, changing the time horizon everywhere from $[0,1]$ to $[0,\tau)$, one can analogously define the spaces $\bH^{\alpha,p}(\tau)$, $\bH^{\alpha,p}_{\ell^2}(\tau)$, $\cH^{\alpha,p}(\tau)$.

We recall two embeddings of $\cH^{\alpha,p}$ spaces. By \cite[Thm.~3.7]{K_Lp}, for any $\alpha$ and $p$ one has the embedding
\begin{equ}\label{eq:Lp-embed1}
\cH^{\alpha,p}(T)\subset L^p(\Omega;C([0,T];H^{\alpha-2,p})),
\end{equ}
with the constant of the embedding independent of $T$. 
By \cite[Thms.~7.1,7.2]{K_Lp}, for any $\alpha\in(0,\frac{1}{2})$ and $\eps\in(0,\alpha)$ there exists a $p_0$ such that for all $p>p_0$ one has the embedding
\begin{equ}\label{eq:Lp-embed2}
\cH^{\alpha,p}(T)\subset L^p(\Omega;C^{\alpha-\eps}_{\textrm{par}}([0,T]\times\T)),
\end{equ}
where $C^{\beta}_{\textrm{par}}$ denotes the space of $\beta$-H\"older continuous functions with respect to the parabolic distance. In other words, with the usual Euclidean distance these are exactly the functions that are $\frac{\beta}{2}$-H\"older continuous in time and $\beta$-H\"older continuous in space.

A process $u\in\cH^{\alpha,p}$ is called a solution to \eqref{eq:SHE} if $f=\Delta u$ and $g=G^\gamma \sigma(u)$, where $G^\gamma$ is defined by $(G^\gamma v)^n=\scal{n}^\gamma v e^n$.

\begin{theorem}\label{thm:Lp}
Let $\gamma\in[0,\frac{1}{4})$ and $\mu\in(\gamma,\frac{1}{2}-\gamma)$. Let $\sigma\in C^{1+\frac{\gamma}{\mu}+\eps_1}$ for some $\eps_1$. Then for all sufficiently large $p$, if $\psi\in H^{\mu-\frac{2}{p},p}$, then \eqref{eq:SHE} has a unique strong solution in $\cH^{\mu,p}$. Moreover, there exists a constant $C=C(\gamma,\mu,p,\eps_1,\|\sigma\|_{C^{1+\frac{\gamma}{\mu}+\eps_1}})$, such that the solution satisfies
\begin{equ}\label{eq:apriori-Lp}
\|u\|_{\cH^{\mu,p}}\leq C\big(1+\|u_0\|_{H^{\mu-\frac{2}{p},p}}\big).
\end{equ}
\end{theorem}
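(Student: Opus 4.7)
The plan is to reduce Theorem \ref{thm:Lp} to a direct application of the general $L^p$ well-posedness theory for semilinear SPDEs developed in \cite{K_Lp} on the $\cH^{\mu,p}$-scale. Since the drift $u\mapsto \Delta u$ is linear and bounded from $\bH^{\mu,p}$ into $\bH^{\mu-2,p}$, the only non-trivial verification concerns the noise coefficient $u\mapsto G^\gamma\sigma(u)$: one needs to show that it sends $\bH^{\mu,p}$ Lipschitz-continuously into $\bH^{\mu-1,p}_{\ell^2}$ and satisfies the ``small leading order'' condition that plays the role of stochastic parabolicity in Krylov's framework.

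The central estimate is the $L^p$-analogue of the bound highlighted in Remark \ref{rem:threshold}, namely
\begin{equ}
\|G^\gamma v\|_{H^{\mu-1,p}_{\ell^2}} \;\lesssim\; \|v\|_{H^{\gamma,p}}
\end{equ}
for $v\in H^{\gamma,p}$, which combined with the interpolation inequality \eqref{eq:interpolationp} yields, for every $\eps>0$,
\begin{equ}
\|G^\gamma v\|_{H^{\mu-1,p}_{\ell^2}} \;\leq\; \eps\|v\|_{H^{\mu,p}}+C(\eps)\|v\|_{H^{\mu-1,p}}.
\end{equ}
I would obtain this by expanding $(1-\Delta)^{(\mu-1)/2}(v\scal{n}^\gamma e^n)$ via a fractional Leibniz/Kato-Ponce decomposition into a ``diagonal'' piece whose $\ell^2(n)$-contribution behaves like $|v|^2\sum_n \scal{n}^{2(\mu-1+\gamma)}|e^n|^2$ inside the $L^{p/2}(\T)$-norm (convergent precisely because $\mu<\tfrac{1}{2}-\gamma$), plus a commutator remainder whose $\ell^2(n)$-norm is absorbed into $\|v\|_{H^{\gamma+\eps',p}}$ using $\mu>\gamma$; the Littlewood-Paley square-function characterization of $H^{\mu-1,p}_{\ell^2}$ keeps the $L^p$-structure throughout.

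To promote the linear bound to the nonlinear setting, I would invoke a Nemytskii estimate in Bessel potential spaces: for $p$ large enough that $\mu-1/p>0$, the embedding $H^{\mu,p}\subset C^{\mu-1/p}$ together with $\sigma\in C^{1+\frac{\gamma}{\mu}+\eps_1}$ and the fractional chain rule gives that $u\mapsto\sigma(u)$ is locally Lipschitz from $H^{\mu,p}$ into $H^{\gamma+\eps,p}$ (the exponent $\gamma/\mu$ is exactly tuned so that composing $\sigma'\in C^{\gamma/\mu+\eps_1}$ with a $C^{\mu-1/p}$-function produces a $C^{\gamma+\eps}$-function once $p$ is taken large). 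Combining this with the previous step produces
\begin{equ}
\|G^\gamma\sigma(u)-G^\gamma\sigma(v)\|_{H^{\mu-1,p}_{\ell^2}}\leq\eps\|u-v\|_{H^{\mu,p}}+C(\eps,\|u\|,\|v\|)\|u-v\|_{H^{\mu-1,p}},
\end{equ}
after which Krylov's theorem supplies both the unique strong solution $u\in\cH^{\mu,p}$ and the a priori bound \eqref{eq:apriori-Lp}.

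The main obstacle is the $L^p$-multiplier estimate above: the $L^2$-argument sketched in Remark \ref{rem:threshold} rests on Plancherel's identity, which is unavailable in the $L^p$-scale, so one has to replace it with Littlewood-Paley square functions and Kato-Ponce commutator bounds, carefully tracking which commutator terms cost how much regularity in the nonlinear step as well. Once this is settled, everything else --- the Nemytskii Lipschitz estimate, the interpolation \eqref{eq:interpolationp}, and the continuous embeddings \eqref{eq:Lp-embed1}-\eqref{eq:Lp-embed2} --- is a routine (if notationally heavy) application of the $L^p$-machinery of \cite{K_Lp}.
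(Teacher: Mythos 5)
Your overall structure is right: reduce to Krylov's $L^p$ theory, prove an $L^p$-version of the multiplier bound on $G^\gamma$, promote it to the nonlinear coefficient via a Nemytskii estimate, and feed it into \cite[Thm.~5.1]{K_Lp}. Two substantive remarks, one on the key lemma and one on a genuine gap.

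On the key multiplier lemma, you propose a Littlewood--Paley / Kato--Ponce decomposition of $(1-\Delta)^{(\mu-1)/2}(v\scal{n}^\gamma e^n)$ into a diagonal piece and a commutator remainder. The paper's Lemma \ref{lem:Krylov-lemma} takes a different, more elementary route: it starts from the endpoint $\vn{Gu}_{0,\beta_1,\beta_2-1}\lesssim\|u\|_{L^p}$ (proved by representing $\partial^{\beta_1}(1-\Delta)^{(\beta_2-1)/2}$ as convolution with a kernel $R$ that is square-integrable when $\beta_1+\beta_2<\tfrac12$, then using Parseval in $n$ followed by Young's inequality in $x$), derives the other endpoint $\vn{Gu}_{2,0,\mu-1}\lesssim\|u\|_{H^{2,p}}$ via the algebraic identity $(1+k^2)ve^k=(1-\Delta)(ve^k)-((1+\Delta)v)e^k-2\partial(\partial v\, e^k)$, and then interpolates using \cite[Sec.~1.15]{Triebel}. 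This sidesteps the issue you would hit: $(1-\Delta)^{(\mu-1)/2}$ is of \emph{negative} order, and the standard Kato--Ponce commutator machinery is set up for positive fractional derivatives; making your decomposition precise in the negative-order regime with simultaneous $\ell^2(n)$-summation is exactly the technical work the interpolation argument avoids. Your approach might be made to work, but it is not obviously simpler, and you have not checked the step you flag as the ``main obstacle''.

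The genuine gap is in the passage from local to global. Your own Nemytskii bound produces a constant $C(\eps,\|u\|,\|v\|)$, i.e.\ the noise coefficient is only \emph{locally} Lipschitz from $H^{\mu,p}$ to $H^{\mu-1,p}_{\ell^2}$ because the fractional chain rule costs a factor like $\|w\|_{C^\nu}^{\gamma/\mu+\eps_1}$. Krylov's Assumption 5.6 is a global Lipschitz-type condition, so you cannot simply ``apply Krylov's theorem'' at that point. The paper introduces a cutoff $\Theta_M(v)=\theta_M(\|v\|_{C^\nu})$, solves the truncated equation for each $M$, derives a uniform-in-$M$ a priori estimate in $\cH^{\mu,p}(T)$ via \eqref{eq:Lp-bound}, \eqref{eq:interpolationp}, the embedding \eqref{eq:Lp-embed1} and Gronwall, upgrades it to a H\"older-in-space bound via \eqref{eq:Lp-embed2}, and then removes the cutoff through the stopping times $\tau_{M,M}=\inf\{t:\|u^{(M)}_t\|_{C^\nu}\geq M\}$ with $\P(\tau_{M,M}<1)\to0$. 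This last block is a real part of the proof, not a routine bookkeeping step, and your proposal currently asserts the conclusion without it.
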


\subsection{Wong-Zakai theorem}

As an application of some of the results obtained in the previous sections, we conclude with another approximation theorem, this time by mollifying the noise in both the spatial and temporal variables, also known as Wong-Zakai approximations.
In the context of space-time white noise driven SPDEs such results date back to \cite{BMS}, widely extended by \cite{HAIRER2015} using the theory of regularity structures.
A simple alternative in the linear case is given by \cite{Gu2019}.
The proof presented here, also for the linear case, is simpler still, at the price of having to take mollifications that are far narrower in time than in space.
\begin{theorem}\label{thm:WZ}
Let $\gamma\in[0,\frac{1}{4})$. Then there exists $A=A(\gamma)>0$ such that the following holds. Let $\psi\in C^{\frac{1}{2}-\gamma}$.
Define $\xi^\eps=(\rho^{\eps^A}\otimes\rho^\eps)\ast\xi$.
Let $v^{(\eps)}$ solve the random PDE
\begin{equ}\label{eq:WZ}
\partial_t v^{(\eps)}=\Delta v^{(\eps)}-\tfrac{1}{2}\|\rho^\eps\|_{H^\gamma}^2 v^{(\eps)}+v^{(\eps)}D^\gamma\xi^\eps
\end{equ}
with initial condition $\psi$.
Then $v^{(\eps)}\to u$ in $L^2([0,1]\times \T)$ in probability as $\eps\to 0$.
\end{theorem}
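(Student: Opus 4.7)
The plan is to reduce the convergence to the strong approximation result of Corollary \ref{cor:strong-approx} (applied with $\sigma(u)=u$) by identifying the renormalization in \eqref{eq:WZ} as the Itô--Stratonovich correction for the Itô SPDE with spatially mollified noise. Since $\psi\in C^{1/2-\gamma}\subset H^{-1/2-\gamma}$, the corollary applies: let $w^{(\eps)}$ denote the process called $v^{(\eps)}$ there, so $w^{(\eps)}\to u$ in $L^2(\Omega;L^2([0,1]\times\T))$, and it suffices to show $v^{(\eps)}-w^{(\eps)}\to 0$ in $L^2([0,1]\times\T)$ in probability.

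The key algebraic fact is the pointwise identity
\[
\sum_{n\in\Z}\scal{n}^{2\gamma}(\rho^\eps\ast e^n)(x)^2=\|\rho^\eps\|_{H^\gamma}^2,\qquad x\in\T,
\]
with the left-hand side being independent of $x$ by translation invariance on $\T$ and equal to the right-hand side by Parseval. This shows that $\tfrac12\|\rho^\eps\|_{H^\gamma}^2 v^{(\eps)}$ is exactly the Itô--Stratonovich correction for the SPDE solved by $w^{(\eps)}$.

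Next, one converts the random PDE to an Itô SPDE. Expand $D^\gamma\xi^\eps_s=\sum_n\scal{n}^\gamma(\rho^\eps\ast e^n)\dot\eta^{n,\eps}_s$ with $\dot\eta^{n,\eps}_s=\int\rho^{\eps^A}(s-r)\,dW^n_r$, and use stochastic Fubini to rewrite the pathwise integral $\int_0^t v^{(\eps)}_s D^\gamma\xi^\eps_s\,ds$ as a sum of Skorokhod integrals against $dW^n_r$. The Skorokhod-to-Itô conversion produces a trace term that, by the identity above, equals $\tfrac12\|\rho^\eps\|_{H^\gamma}^2 v^{(\eps)}$ modulo a remainder $R^\eps$. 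The renormalization in \eqref{eq:WZ} cancels this explicit trace, so $v^{(\eps)}$ satisfies the same Itô SPDE as $w^{(\eps)}$ with an additional forcing $R^\eps$. An $L^2$-energy estimate on $v^{(\eps)}-w^{(\eps)}$ (using the absorption inequality of Remark \ref{rem:threshold} together with \eqref{eq:interpolation}) plus Gronwall then reduces matters to $R^\eps\to 0$.

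The remainder $R^\eps$ involves time-increments $v^{(\eps)}_s-v^{(\eps)}_r$ on scales $|s-r|\lesssim\eps^A$ paired with the divergent spatial weights $\scal{n}^\gamma(\rho^\eps\ast e^n)$. By Itô isometry and the identity above, one obtains a bound of the form $\E\|R^\eps\|^2\lesssim\eps^{-2\gamma}\,\sup_{|s-r|\leq\eps^A}\E\|v^{(\eps)}_s-v^{(\eps)}_r\|_{L^2}^2$, so any power-type quantitative time-Hölder bound on $v^{(\eps)}$ beats the $\eps^{-2\gamma}$ loss once $A$ is chosen large enough. The main obstacle is securing such a bound uniformly in $\eps$ despite the divergent renormalization; since $e^{t\|\rho^\eps\|_{H^\gamma}^2/2}$ is a spatial constant and commutes with $\Delta$, one can remove the renormalization by the substitution $\tilde v^{(\eps)}=e^{t\|\rho^\eps\|_{H^\gamma}^2/2}v^{(\eps)}$, or equivalently work with the Itô formulation of $v^{(\eps)}$ derived above (in which the renormalization has cancelled), and then invoke the parabolic Hölder embedding \eqref{eq:Lp-embed2} uniformly in $\eps$. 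Taking $A=A(\gamma)$ sufficiently large then yields $R^\eps\to 0$ and concludes.
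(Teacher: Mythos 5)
Your high-level reduction to Corollary~\ref{cor:strong-approx} and the algebraic identity $\sum_n\scal{n}^{2\gamma}(\rho^\eps\ast e^n)(x)^2=\|\rho^\eps\|_{H^\gamma}^2$ coincide with the paper's opening moves. From there, however, the two arguments diverge: the paper exploits linearity to write \emph{both} $v^{(\eps)}$ (Itô) and $v^{(\delta,\eps)}$ (pathwise) via the Feynman--Kac formula, reduces the comparison to moments of $1-\exp(\cdot)$ involving a Stieltjes integral that is then integrated by parts against the Brownian bridge, and controls the result by Novikov's condition, Fernique's theorem, and explicit pointwise bounds $|\partial^k e^n_\eps|\lesssim\scal{n}^{-1-\kappa_2}\eps^{-1-k-\kappa_2}$. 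You instead propose a Skorokhod/Malliavin reformulation of the random PDE followed by an $L^2$-energy estimate. That is a recognized avenue for Wong--Zakai results in general, but here it is the more complicated one, precisely because the equation's linearity --- which the Feynman--Kac representation consumes for free --- is not exploited.

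The gap in your sketch is in the sentence ``The Skorokhod-to-Itô conversion produces a trace term that, by the identity above, equals $\tfrac12\|\rho^\eps\|_{H^\gamma}^2 v^{(\eps)}$ modulo a remainder $R^\eps$,'' which is doing essentially all the work and is not justified. To carry it out you would have to (a) show $v^{(\eps)}$ is Malliavin differentiable (it solves a pathwise, anticipative PDE, so this is not immediate), (b) compute the trace term $\sum_n\int\rho^{\eps^A}(r-s)D^n_r v^{(\eps)}_s\,ds$ and identify the leading part with the renormalization constant, and (c) bound the remainder. Your claimed bound $\E\|R^\eps\|^2\lesssim\eps^{-2\gamma}\sup_{|s-r|\le\eps^A}\E\|v^{(\eps)}_s-v^{(\eps)}_r\|_{L^2}^2$ is asserted without derivation, and the exponent $-2\gamma$ looks optimistic compared with the $\eps^{-3-\kappa_2}$ losses the paper incurs from $|\partial^2 e^n_\eps|$. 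More seriously, the final step is circular: to control $R^\eps$ you need a uniform-in-$\eps$ time-Hölder bound on $v^{(\eps)}$, but \eqref{eq:Lp-embed2} applies to Itô SPDEs in $\cH^{\mu,p}$, and the Itô reformulation of $v^{(\eps)}$ whose a priori bound you would invoke already contains $R^\eps$ as a forcing. The substitution $\tilde v^{(\eps)}=e^{t\|\rho^\eps\|_{H^\gamma}^2/2}v^{(\eps)}$ removes the damping term but leaves $\tilde v^{(\eps)}$ solving a pathwise PDE, not an Itô one, so the embedding still does not apply directly. These points would need to be resolved before the argument closes; the paper's Feynman--Kac route sidesteps all of them.
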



\section{Proofs}

We start by summarising some properties of Sobolev spaces. Recall another, auxiliary scale of norms, also known as Slobodeckij norms: for $\alpha\in(0,1)$, $p\in[1,\infty)$, set
\begin{equ}
\|f\|_{W^{\alpha,p}}^p:=\int_\T |f(x)|^p\,dx+\int_\T\int_\T\frac{|f(x)-f(y)|^p}{|x-y|^{1+p\alpha}}\,dx\,dy.
\end{equ}

\begin{proposition}\label{prop:Sobolev}
Let $\alpha\in(0,\frac{1}{2})$, $\beta\in(0,1]$, $\eps>0$, $p\in[2,\infty)$. Then the following hold, with the constant $C$ depending only on $\alpha,\beta,\eps,p$.
\begin{enumerate}[(i)]
\item One has $W^{\alpha,2}=H^\alpha$ and  $W^{\alpha+\eps,p}\subset H^{\alpha,p}\subset W^{\alpha-\eps,p}$.
\item One has $\|fg\|_{H^\alpha}\leq C\|f\|_{H^{\alpha}}\|g\|_{C^{\alpha+\eps}}$ and $\|fg\|_{H^{\alpha,p}}\leq C\|f\|_{H^{\alpha+\eps,p}}\|g\|_{C^{\alpha+2\eps}}$. In particular, $C^{\alpha+\eps}\subset H^{\alpha,p}$.
\item One has $\|f\circ g\|_{H^{\alpha,p}}\leq C\|f\|_{C^\beta(\R)}(1+\|g\|_{H^{\frac{\alpha}{\beta}+\eps,p}}^\beta)$. 
\item If $\alpha\geq \frac{1}{2}-\frac{1}{p}$, then $H^\alpha\subset L^p$.
\item If $\alpha\geq \beta-\frac{1}{p}$, then $H^{\alpha,p}\subset C^\beta$.
\end{enumerate}
\end{proposition}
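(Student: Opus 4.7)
The proposition is a compilation of classical Sobolev space facts; my plan is to deduce everything from (i) together with the Slobodeckij characterization, rather than handling Bessel potentials directly.

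\textbf{Part (i).} The identity $W^{\alpha,2}=H^\alpha$ with equivalent norms is proven by the standard Fourier-series computation: expanding $f=\sum_n \hat f_n e^n$ and integrating the kernel $|x-y|^{-1-2\alpha}$ against $|e^n(x)-e^n(y)|^2$ gives an equivalent sum $\sum_n|n|^{2\alpha}|\hat f_n|^2$. The sandwich $W^{\alpha+\eps,p}\subset H^{\alpha,p}\subset W^{\alpha-\eps,p}$ is a standard statement about Besov vs.\ Bessel potential spaces: $W^{s,p}=B^s_{p,p}$ for non-integer $s$, and one always has $B^{s+\eps}_{p,p}\subset H^{s,p}\subset B^{s-\eps}_{p,p}$; I would simply quote Triebel.

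\textbf{Parts (iv), (v).} Both are standard Sobolev embeddings for $H^{\alpha,p}$. I would cite them from Triebel; the assumption $\alpha<\tfrac12$ in (iv) combined with $\alpha\ge\tfrac12-\tfrac1p$ forces $p\ge 2$, so the usual $H^\alpha\subset L^{\frac{2}{1-2\alpha}}$ Sobolev embedding applies. For (v) one combines (i) with the embedding $W^{\alpha,p}\subset C^{\alpha-1/p}$ at the $B$-scale.

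\textbf{Part (ii).} By (i) it suffices to prove the Slobodeckij-norm estimate
\begin{equ}
\|fg\|_{W^{\alpha,p}}\lesssim \|f\|_{W^{\alpha+\eps/2,p}}\|g\|_{C^{\alpha+\eps}}.
\end{equ}
Write $|(fg)(x)-(fg)(y)|\leq |f(x)-f(y)|\|g\|_{L^\infty}+|f(y)|\,|g(x)-g(y)|$, raise to the $p$-th power and integrate. The first term is bounded directly by $\|f\|_{W^{\alpha,p}}^p\|g\|_{L^\infty}^p$, while in the second one uses $|g(x)-g(y)|\leq \|g\|_{C^{\alpha+\eps}}|x-y|^{\alpha+\eps}$ to get an integrable kernel against $\|f\|_{L^p}^p$. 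The $H^\alpha$ version is the $p=2$ case.

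\textbf{Part (iii).} This is the Hölder composition estimate; I would again use (i) and prove the Slobodeckij-norm bound
\begin{equ}
\int\int\frac{|f(g(x))-f(g(y))|^p}{|x-y|^{1+p\alpha}}\,dx\,dy\leq \|f\|_{C^\beta}^p \int\int\frac{|g(x)-g(y)|^{p\beta}}{|x-y|^{1+p\alpha}}\,dx\,dy,
\end{equ}
and then recognise the right-hand side, after Hölder's inequality in $(x,y)$ and a change of fractional exponent, as controlled by $\|g\|_{W^{\alpha/\beta+\eps/2,p}}^{p\beta}$; using (i) again I pass to $H^{\alpha/\beta+\eps,p}$. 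The $L^p$ part of the norm of $f\circ g$ just contributes the $+1$.

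\textbf{Main obstacle.} None of the steps is really hard, but the delicate point is matching the exact exponents in (ii) and (iii): one needs to be willing to pay the arbitrarily small $\eps$ to pass between $H$ and $W$ scales through (i) at every equivalence step, so as not to be forced to invoke more refined (e.g.\ paraproduct) estimates. Since arbitrary $\eps>0$ losses are allowed in the statement, this is harmless, and in practice I would simply cite the corresponding statements in Triebel's monograph for the cleanest exposition.
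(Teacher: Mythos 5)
Your plan matches the paper's intended proof almost exactly: cite the Triebel-scale folklore for (i), reduce (ii) and (iii) to Slobodeckij-seminorm estimates plus H\"older's inequality and transfer via (i), and quote classical Sobolev/Morrey embeddings for (iv) and (v). (The paper's own sketch is even more terse on (ii), asserting it ``follows immediately from (i),'' whereas you supply the pointwise product decomposition $|(fg)(x)-(fg)(y)|\leq|f(x)-f(y)|\|g\|_\infty+|f(y)||g(x)-g(y)|$ that actually makes this work; your H\"older step for (iii) is the same one the paper uses.) One small remark: the Morrey embedding you invoke, $W^{\alpha,p}\subset C^{\alpha-1/p}$, proves $H^{\alpha,p}\subset C^\beta$ under $\alpha\geq\beta+\tfrac1p$, whereas the paper states the hypothesis as $\alpha\geq\beta-\tfrac1p$; the sign in the paper's statement appears to be a typo (the statement with the minus sign fails, e.g.\ for $\alpha=0$), and the way (v) is actually used in the proof of Theorem \ref{thm:Lp} --- taking $p$ large so that $H^{\mu',p}\subset C^\nu$ with $\mu'>\nu$ --- is consistent with the $+\tfrac1p$ version that you prove.
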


\subsection{Weak existence}

We follow the notation of the variational approach as in \cite[Sec.~2]{Neelima}, which extends earlier works \cite{Pardoux, KR_SEE, LR}.
Consider the Gelfand triple
\begin{equ}
V\subset H\subset V^*\quad\equiv \quad H^{\mu}\subset H^{\mu-1}\subset H^{\mu-2}.
\end{equ}
The operator $A:V\mapsto V^*$ is simply taken to be $Av=\Delta v$,
while the operators $B^n:V\mapsto H$ are defined by $B^n v=\sigma(v)\scal{n}^\gamma e^n$.
As for numerical parameters, we take $\alpha=2$, $\beta=0$, $p_0=2$ in the notation of \cite{Neelima}, the other constants will follow from the calculations below.

\begin{remark}\label{rem:loc-mon}
We remark that since we take $\beta=0$, $p_0=2$, the true generality of the locally monote approach \cite{LR, Neelima} is not fully required. We do use however the flexibility to allow the constant in the monotonicity condition to depend quadratically on $\|u\|_{V}$, which is the reason why the classical version of the theory \cite{Pardoux, KR_SEE} does not fit our purposes exactly.
\end{remark}

\begin{proof}[Proof of Theorem \ref{thm:main-weak}]
We first verify conditions $A_1)$, $A_3)$, and $A_4)$ in \cite{KR_SEE} (also denoted by the same numbers in \cite{Neelima}, the results of which we will use).


For $A_1)$, one requires that for any $v,v_1,v_2\in H^\mu$, the mapping $\lambda\mapsto(v,\Delta(v_1+\lambda v_2))_{\mu-1}$ is continuous. This is immediate.

For $A_4)$, one requires the existence of a constant $K$ such that for any $v\in H^\mu$, $\|\Delta v\|_{H^{\mu-2}}\leq K\|v\|_{H^\mu}$. This is also immediate, with $K=1$.



For $A_3)$, one requires the existence of a constant  $K$ such that for all $v\in H^\mu$,
\begin{equ}\label{eq:coercive}
2(v,\Delta v)_{\mu-1}+\sum_{n\in\Z}\|\sigma(v) \scal{n}^\gamma e^n\|_{H^{\mu-1}}^2\leq -\|v\|_{H^\mu}^2+K(1+\|v\|_{H^{\mu-1}}^2).
\end{equ}
Since $(v,\Delta v)_{\mu-1}=-\|v\|_{H^\mu}^2+\|v\|_{H^{\mu-1}}^2$, \eqref{eq:coercive} follows if one shows (with  a possibly different constant $K$)
\begin{equ}\label{eq:key}
\sum_{n\in\Z}\|\sigma(v) \scal{n}^\gamma e^n\|_{H^{\mu-1}}^2\leq K\|v\|_{H^{\mu-\eps_2}}^\delta
\end{equ}
with some $\eps_2>0$ and $\delta\leq 2$
and uses the interpolation bound \eqref{eq:interpolation}. So we will show \eqref{eq:key}.
We will frequently use the following simple trick (motivated by \cite[Lem.~8.4]{K_Lp}, see also \cite[Ex.~7.6]{DPZ}, as well as \cite[Lem.~3.4]{DGG} for a formulation closest to the one below). For any $w\in H^\gamma$ one has
\begin{equs}
\sum_{n\in\Z}\|w \scal{n}^\gamma e^n\|_{H^{\mu-1}}^2&=\sum_{n\in\Z }\scal{n}^{2\gamma}\sum_{m\in\N}\scal{m}^{2\mu-2}(w e^n,e^m)_{L^2}^2
\\
&=\sum_{m\in\Z}\scal{m}^{2\mu-2}\sum_{n\in\Z }\scal{n}^{2\gamma}(we^m,e^n)_{L^2}^2
\\
&=\sum_{m\in\Z}\scal{m}^{2\mu-2}\|w e^m\|_{H^\gamma}^2.
\end{equs}
By the condition $\mu<1/2-\gamma$,
one has that $\eps:=\frac{1}{3}(1-2\mu+2\gamma)>0$ and $2\mu-2+2\gamma+2\eps<-1$.
Using Proposition \ref{prop:Sobolev} (ii), we get
\begin{equs}
\sum_{n\in\Z}\|w \scal{n}^\gamma e^n\|_{H^{\mu-1}}^2&\lesssim \|w\|_{H^\gamma}^2 \sum_{m\in\Z}\scal{m}^{2\mu-2}\|e^m\|_{C^{\gamma+\eps}}^2
\\
&\leq\|w\|_{H^\gamma}^2 \sum_{m\in\Z}\scal{m}^{2\mu-2+2\gamma+2\eps}
\\
&\lesssim\|w\|_{H^\gamma}^2.\label{eq:krylov-type}
\end{equs}
Applying this with $w=\sigma(v)$ and using Proposition \ref{prop:Sobolev} (iii) (with $\alpha=\gamma$, $\beta=\frac{\gamma}{\mu}+\eps_1$ and $\eps$ small enough so that $\eps_2:=\mu-\frac{\gamma}{\frac{\gamma}{\mu}+\eps_1}-\eps>0$)
we obtain \eqref{eq:key}.

It immediately follows that the same bounds hold for the equations
\begin{equ}
dv^{(N)}_t=\Delta v^{(N)}_t\,dt+\sum_{|n|\leq N}(\rho_{\frac{1}{N}}\ast\sigma)(v^{(N)}_t)\scal{n}^\gamma e^n\,dW^n_t.
\end{equ}
For any $N\in\N$ this equation has a unique strong solution $v^{[N]}$, and by \cite[Thm.~2.2]{Neelima}, the above verified conditions $A_3)$ and $A_4)$ imply the uniform bound
\begin{equ}
\sup_{N\in\N}\Big(\E\sup_{t\in[0,1]}\|v^{(N)}_t\|_{H^\mu}^2+\E\int_0^1\|v^{(N)}_{t}\|_{H^{\mu-1}}^2\,dt\Big)<\infty.
\end{equ}
One can conclude the existence of weak solutions from such a uniform bound by compactness arguments, see e.g \cite[Sec.~4]{DGG}.
\end{proof}

\subsection{Strong existence and uniqueness - variational theory}
Before the proof  introduce a few auxiliary exponents.
\begin{proposition}\label{prop:exponents}
Let $\gamma\geq 0$ satisfy \eqref{eq:gamma strange bound}.
Then there exists $\mu\in(\gamma,\frac12-\gamma)$, $q\in(1,\frac{\mu}{\gamma})$ such that
defining $ p=\frac{1}{1-\frac{1}{ q}}$, $\kappa=\frac{1}{2 q}$, $\theta=1+\kappa-\mu$, and $\alpha_0=\frac{2}{ q}\frac{1}{1-\theta}$,
we have $\theta\in(0,1)$ and $\alpha_0< 2$.  Moreover,
\begin{enumerate}[(a)]
\item $\Big(1+2\gamma-\frac{1+2\mu}{q}\Big)p<1$;
\item $H^\kappa\subset L^{2p}$;
\item $\|\cdot\|_{H^{\kappa}}\leq \|\cdot\|_{H^{\mu}}^\theta\|\cdot\|_{H^{\mu-1}}^{1-\theta}$.
\end{enumerate}
\end{proposition}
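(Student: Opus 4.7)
The plan is to simply unpack each of the conditions (a), (b), (c), together with $\theta\in(0,1)$ and $\alpha_0<2$, into elementary inequalities on $(\mu,q)$, and then show that the bound \eqref{eq:gamma strange bound} is precisely what is needed to make the resulting system consistent.

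First I would note that (c) is the standard interpolation inequality for Sobolev spaces at the interpolation point $\kappa = \theta\mu + (1-\theta)(\mu-1) = \mu - 1 + \theta$; since $\theta = 1+\kappa-\mu$ is defined precisely so that this holds, (c) is free. Similarly (b) follows from Proposition \ref{prop:Sobolev}(iv): one computes $\tfrac{1}{2} - \tfrac{1}{2p} = \tfrac{1}{2} - \tfrac{q-1}{2q} = \tfrac{1}{2q} = \kappa$, so $H^\kappa\subset L^{2p}$ is exactly the borderline Sobolev embedding. For (a), a direct calculation with $p=q/(q-1)$ gives
\begin{equ}
\Big(1 + 2\gamma - \tfrac{1+2\mu}{q}\Big)p = \tfrac{q-1+2\gamma q - 2\mu}{q-1},
\end{equ}
so (a) is equivalent to $2\gamma q - 2\mu < 0$, i.e., exactly to $q<\mu/\gamma$, which we assume.

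Next, $\theta\in(0,1)$ translates to $\kappa<\mu$ and $\mu-\kappa<1$: the latter is trivial since $\mu<\tfrac12$, the former will follow automatically from the constraint below. The condition $\alpha_0<2$ amounts to $(1-\theta)=\mu-\kappa > 1/q$, i.e.\ $\mu - \tfrac{1}{2q} > \tfrac{1}{q}$, i.e.\ $q>\tfrac{3}{2\mu}$. Combined with $q<\mu/\gamma$, this means we need the interval $(\tfrac{3}{2\mu},\mu/\gamma)$ to be nonempty within $(1,\infty)$, which requires
\begin{equ}
\tfrac{3}{2\mu} < \tfrac{\mu}{\gamma} \quad \Longleftrightarrow \quad \mu^2 > \tfrac{3\gamma}{2}.
\end{equ}
(The lower bound $q>1$ is automatic since $\mu/\gamma>1$ and $3/(2\mu)>1$ when $\mu<3/2$.)

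It remains to find $\mu\in(\gamma,\tfrac12-\gamma)$ with $\mu>\sqrt{3\gamma/2}$. Since $\sqrt{3\gamma/2}>\gamma$ in the relevant range $\gamma<1/4$, such $\mu$ exists iff $\sqrt{3\gamma/2}<\tfrac12-\gamma$, i.e.\
\begin{equ}
\tfrac{3\gamma}{2} < \big(\tfrac{1}{2}-\gamma\big)^2 \quad \Longleftrightarrow \quad \gamma^2 - \tfrac{5\gamma}{2} + \tfrac14 > 0.
\end{equ}
The roots of the quadratic are $(5\pm\sqrt{21})/4$, so the inequality holds on $\gamma<(5-\sqrt{21})/4=\gamma_0$, which is assumption \eqref{eq:gamma strange bound}. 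Picking any $\mu$ in the (now nonempty) open interval $\big(\max(\gamma,\sqrt{3\gamma/2}),\tfrac12-\gamma\big)$ and any $q\in(\tfrac{3}{2\mu},\mu/\gamma)$ yields all the required properties.

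The only step with any real content is the quadratic computation identifying $\gamma_0$; everything else is mechanical bookkeeping of exponents, and no estimates beyond elementary Sobolev interpolation and embedding are needed.
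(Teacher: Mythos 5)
Your proof is correct, but it takes a genuinely different route from the paper's. The paper fixes the boundary values $\mu=\tfrac{1}{2}-\gamma$, $q=\mu/\gamma$, rewrites $\theta$ and $\alpha_0$ as explicit functions of $\gamma$ alone, namely $\theta(\gamma)=\tfrac{1}{2}-\gamma+\tfrac{\gamma}{1-2\gamma}$ and $\alpha_0(\gamma)=\tfrac{8\gamma}{4\gamma^2-6\gamma+1}$, checks $\theta(\gamma)<1$ and $\alpha_0(\gamma)<2$ on $[0,\gamma_0]$ (noting $\alpha_0(\gamma_0)=2$ and that $\alpha_0$ is strictly increasing), and then invokes continuity to move into the open region. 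You instead translate every condition into a clean inequality on $(\mu,q)$: (a) reduces to $q<\mu/\gamma$, $\alpha_0<2$ reduces to $q>\tfrac{3}{2\mu}$, and the remaining constraints are free; nonemptiness of the $q$-interval forces $\mu^2>\tfrac{3\gamma}{2}$, and nonemptiness of the resulting $\mu$-interval produces exactly the quadratic $\gamma^2-\tfrac{5\gamma}{2}+\tfrac14>0$ whose smaller root is $\gamma_0$. Your version is a derivation of $\gamma_0$ rather than a verification of it: the feasibility region is described explicitly and the appearance of $\gamma_0$ is demystified. The paper's version is shorter on the page once $\gamma_0$ is declared, but requires the reader to take the continuity/monotonicity bookkeeping on faith. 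Both are sound; yours is arguably the more instructive exposition of where the ``strange bound'' \eqref{eq:gamma strange bound} comes from.
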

\begin{proof}
The inequality $\theta>0$ is trivial. Concerning the inequalities $\theta<1$ and $\alpha_0<2$, by continuity it suffices to verify them with $\mu=\frac{1}{2}-\gamma$ and $q=\frac{\mu}{\gamma}$.
With this choice, rewriting $\theta$ only in terms of $\gamma$, one has $ \theta(\gamma)=\frac{1}{2}-\gamma+\frac{\gamma}{1-2\gamma}$, from where it is easy to verify that $\theta(\gamma)<1$ on $[0,\frac{3-\sqrt{5}}{4})\supset[0,\gamma_0]$, as claimed.
Similarly, rewriting $\alpha_0$ purely in terms of $\gamma$, one has
\begin{equ}
\alpha_0(\gamma)=\frac{2\gamma}{\frac{1}{2}-\gamma}\frac{1}{\frac{1}{2}-\gamma-\frac{\gamma}{1-2\gamma}}=\frac{8\gamma}{4\gamma^2-6\gamma+1}.
\end{equ}
From here it is easy to verify that $\alpha_0(\gamma_0)=2$, $\tilde\alpha_0(0)=0$, and $\alpha_0$ is strictly increasing on $[0,\frac{3-\sqrt{5}}{4})\supset[0,\gamma_0]$, implying the claim.

Claim (a) follows once we bound $-\frac{2\mu}{q}< -2\gamma$ in the parentheses on the left-hand side.
Claim (b) follows from Sobolev embedding (see Proposition \ref{prop:Sobolev} (iv)).
Claim (c) follows from $\kappa=\theta\mu+(1-\theta)(\mu-1)$, $\theta\in[0,1]$, and the definition of the Sobolev norms.
\end{proof}

\begin{proof}[Proof of Theorem \ref{thm:main-strong}]
By \cite[Thms~2.3-2.4]{Neelima}, Theorem \ref{thm:main-strong} follows once conditions A-1 through A-4 in \cite{Neelima} are satisfied. We have already checked that A-1, A-3, and A-4 hold even under the more relaxed conditions of Theorem \ref{thm:main-weak}.
It remains to verify A-2, for which one requires the existence of a constant $K$ such that for all $u,v\in H^{\mu}$,
\begin{equ}
2(u-v,\Delta u-\Delta v)_{\mu-1}+\sum_{n\in\Z}\big\|\big(\sigma(u)-\sigma(v)\big)\scal{n}^\gamma e^n\big\|_{H^{\mu-1}}^2\leq K\|u-v\|_{H^{\mu-1}}^2(1+\|u\|_{H^\mu}^2).
\end{equ}
In the case (i), the equation is linear, and so coercivity (i.e. A-3) implies monotonicity (i.e. A-2), so there is nothing more to prove. We therefore focus on the case (ii).
As before, using that $(u-v,\Delta u-\Delta v)_{\mu-1}=-\|u-v\|_{H^\mu}^2+\|u-v\|_{H^{\mu-1}}^2$, it suffices to show that
\begin{equ}\label{eq:monoton2}
\sum_{n\in\N}\big\|(\sigma(u)-\sigma(v))\scal{n}^\gamma e^n\big\|_{H^{\mu-1}}^2\leq 2\|u-v\|_{H^\mu}^2+K\|u-v\|_{H^{\mu-1}}^2(1+\|u\|_{H^\mu}^2).
\end{equ}
From \eqref{eq:krylov-type} we have that
\begin{equ}\label{eq:00}
\sum_{n\in\N}\big\|(\sigma(u)-\sigma(v))\scal{n}^\gamma e^n\big\|_{H^{\mu-1}}^2\lesssim \|\sigma(u)-\sigma(v)\|_{H^{\gamma}}^2.
\end{equ}
Recall the elementary inequality
\begin{equ}\label{eq:ineq-sigma}
|\sigma(a)-\sigma(b)-\sigma(c)+\sigma(d)|\leq \|\sigma\|_{C^1}|a-b-c+d|+\|\sigma\|_{C^{1+\frac{1}{q}}}|a-b||a-c|^{\frac{1}{q}}.
\end{equ}
Therefore we can continue \eqref{eq:00} as
\begin{equ}\label{eq:01}
\sum_{n\in\N}\big\|(\sigma(u)-\sigma(v))\scal{n}^\gamma e^n\big\|_{H^{\mu-1}}^2\lesssim I+J,
\end{equ}
where
\begin{equ}
I=
\|u-v\|_{H^\gamma}^2,\,\,\,
J=
\int_\T\int_\T\frac{|u(x)-v(x)||u(x)-u(y)|^{\frac{1}{q}}}{|x-y|^{1+2\gamma}}\,dx\,dy.
\end{equ}
For $I$ we simply recall \eqref{eq:interpolation} to get
\begin{equ}\label{eq:05}
I\leq \eps\|u-v\|_{H^{\mu}}^2+C(\eps)\|u-v\|_{H^{\mu-1}}^2
\end{equ}
for any $\eps>0$.
For $J$ we use the exponents from Proposition \ref{prop:exponents}. By H\"older's inequality with exponents $p$ and $q$, followed by properties (a), (b), and (c)
\begin{equs}[eq:intermediate]
J&\lesssim \Bigg(\int_\T\int_\T\frac{|u(x)-v(x)|^{2p}}{|x-y|^{\big(1+2\gamma-\frac{1+2\mu}{q}\big)p}}\,dx\,dy\Bigg)^{\frac{1}{p}}\Bigg(\int_\T\int_\T\frac{|u(x)-u(y)|^2}{|x-y|^{1+2\mu}}\,dx\,dy\Bigg)^{\frac{1}{q}}
\\
&\lesssim \|u-v\|_{L^{2p}}^2\|u\|_{H^\mu}^{\frac{2}{q}}
\\
&\lesssim \|u-v\|_{H^\kappa}^2\|u\|_{H^\mu}^{\frac{2}{q}}
\\
&\leq \|u-v\|_{H^{\mu}}^{2\theta}\|u-v\|_{H^{\mu-1}}^{2(1-\theta)}\|u\|_{H^\mu}^{\frac{2}{q}}.
\end{equs}
It remains to use Young's inequality $a^\theta b^{1-\theta}\leq \delta^{\frac{1}{\theta}}a+\delta^{\frac{-1}{1-\theta}}b$ with $\delta=\eps^{\theta}(1+\|u\|_{H^{\mu}})^{-\frac{\theta}{q}}$,
so that we obtain
\begin{equs}
J&\leq \Big(\eps\|u-v\|_{H^{\mu}}^{2}(1+\|u\|_{H^{\mu}})^{-\frac{2}{q}}+C(\eps)
\|u-v\|_{H^{\mu-1}}^{2}(1+\|u\|_{H^{\mu}})^{\frac{2\theta}{q(1-\theta)}}\Big)
\|u\|_{H^\mu}^{\frac{2}{q}}
\\
&\leq \eps\|u-v\|_{H^{\mu}}^2+C(\eps)\|u-v\|_{H^{\mu-1}}^2(1+\|u\|_{H^{\mu}})^{\alpha_0}.\label{eq:I2-final}
\end{equs}
Since $\alpha_0< 2$, combining \eqref{eq:01}, \eqref{eq:05}, \eqref{eq:I2-final}, and choosing $\eps>0$ small enough, we get \eqref{eq:monoton2}, concluding the proof.
\end{proof}

\subsection{Strong spatial approximation}
\begin{proof}[Proof of Corollary \ref{cor:strong-approx}]
The proofs for the two approximations are fairly similar, we start with the somewhat easier case of the spectral cutoff.
The difference $u-v^{[N]}$ satisfies the equation
\begin{equ}
d (u_t-v^{[N]}_t)=\Delta (u_t-v^{[N]}_t)\,dt+\sum_{n\in\Z}\scal{n}^\gamma\Big(\big(u_t-v^{[N]}_t\big)e^n\mathbf{1}_{|n|\leq N}+u_t e^n\mathbf{1}_{|n|>N}\Big)\,dW^n_t
\end{equ}
with initial condition $0$.
From It\^o's formula and the elementary inequality $(a+b)^2\leq 2a^2+2b^2$ we get
\begin{equs}
\|u_t-v^{[N]}_t\|_{H^{\mu-1}}^2\leq &\int_0^t2(u_s-v^{[N]}_s,\Delta u_s-\Delta v^{[N]}_s)_{\mu-1}
\\
&\qquad+2\sum_{|n|\leq N}\big\|\big(u_s-v^{[N]}_s\big)\scal{n}^\gamma e^n\big\|_{H^{\mu-1}}^2
\\
&\qquad+2\sum_{|n|>N}\big\|u_s \scal{n}^\gamma e^n\big\|_{H^{\mu-1}}^2\,ds+M_t,
\end{equs}
where $M_t$ is a martingale. Using \eqref{eq:krylov-type} and \eqref{eq:interpolation} as by now usual, we get, with some constant $K$,
\begin{equs}
\|u_t-v^{[N]}_t\|_{H^{\mu-1}}^2\leq &\int_0^t-\|u_s-v^{[N]}_s\|_{H^\mu}^2\,ds+\int_0^tK\|u_s-v^{[N]}_s\|_{H^{\mu-1}}^2\,ds
\\
&\qquad+\int_0^t2\sum_{|n|>N}\big\|u_s \scal{n}^\gamma e^n\big\|_{H^{\mu-1}}^2\,ds+M_t.
\end{equs}
Note that one trivially has $\scal{n}^\gamma\mathbf{1}_{n>N}\leq \scal{n}^{\gamma+\kappa}N^{-\kappa}$ for any $\kappa>0$. For the $\kappa$ of the theorem we even have $\mu<\frac{1}{2}-(\gamma-\kappa)$.
Using these observations  and \eqref{eq:krylov-type} once more (with $\gamma+\kappa$ in place of $\gamma$) to bound the last term, we have
\begin{equs}
\|u_t-v^{[N]}_t\|_{H^{\mu-1}}^2\leq &\int_0^t-\|u_s-v^{[N]}_s\|_{H^\mu}^2\,ds+\int_0^tK\|u_s-v^{[N]}_s\|_{H^{\mu-1}}^2\,ds
\\
&\qquad+KN^{-\kappa}\int_0^t\big\|u_s\big\|_{H^{\gamma+\kappa}}^2\,ds+M_t.\label{eq:approx-intermediate}
\end{equs}
Note that the last integral has finite expectation since $\gamma+\kappa<\frac{1}{2}-\mu<\frac{1}{2}-\gamma$ and so by Theorem \ref{thm:main-strong} with $\gamma+\kappa$ in place of $\mu$ one gets $u\in L^2(\Omega\times[0,1];H^{\gamma+\kappa})$.
Therefore by simply bounding the first integral by $0$, taking expectation, and using Gronwall's lemma we get
\begin{equ}
\sup_{t\in[0,1]}\E\|u_t-v^{[N]}_t\|_{H^{\mu-1}}^2\lesssim N^{-\kappa},
\end{equ}
and substituting this bound back into \eqref{eq:approx-intermediate} after taking expectations therein, we get
\begin{equ}
\E\int_0^1\|u_s-v^{[N]}_s\|_{H^\mu}^2\,ds\lesssim N^{-\kappa},
\end{equ}
as claimed.

Consider now the approximation by spatial mollification. Denote $e_\eps^n=\rho^\eps\ast e^n$. The difference $v-v^{(\eps)}$ satisfies
\begin{equ}
d (u_t-v^{(\eps)}_t)=\Delta (u_t-v^{(\eps)}_t)\,dt+\sum_{n\in\Z}\scal{n}^\gamma\Big(\big(u_t-v^{(\eps)}_t\big)e^n_\eps+u_t (e^n-e^n_\eps)\Big)\,dW^n_t
\end{equ}
with initial condition $0$.
From It\^o's formula and the elementary inequality $(a+b)^2\leq 2a^2+2b^2$ we get
\begin{equs}
\|u_t-v^{(\eps)}_t\|_{H^{\mu-1}}^2\leq &\int_0^t2(u_s-v^{(\eps)}_s,\Delta u_s-\Delta v^{(\eps)}_s)_{\mu-1}+2\sum_{n\in\Z}\big\|\big(u_s-v^{(\eps)}_s\big)\scal{n}^\gamma e^n_\eps\big\|_{H^{\mu-1}}^2
\\
&\qquad+2\sum_{n\in\Z}\big\|u_s \scal{n}^\gamma (e^n-e^n_\eps)\big\|_{H^{\mu-1}}^2\,ds+M_t,\label{eq:approx-molli}
\end{equs}
where $M_t$ is a martingale. To follow the same steps as before, we need some modifications to \eqref{eq:krylov-type}. 
Let $v\in H^\gamma$ and let $\varrho$ be either $\rho_\eps$ or $\delta-\rho_\eps$.
By basic properties of the convolution and $L^2$ inner product
\begin{equs}
\sum_{n\in\Z}\|v \scal{n}^\gamma \varrho\ast e^n\|_{H^{\mu-1}}^2&=\sum_{n\in\Z }\scal{n}^{2\gamma}\sum_{m\in\N}\scal{m}^{2\mu-2}(v \varrho\ast e^n,e^m)_{L^2}^2
\\
&=\sum_{m\in\Z}\scal{m}^{2\mu-2}\sum_{n\in\Z }\scal{n}^{2\gamma}(\varrho\ast(v e^m), e^n)_{L^2}^2
\\
&=\sum_{m\in\Z}\scal{m}^{2\mu-2}\|\varrho\ast(v e^m)\|_{H^\gamma}^2.
\end{equs}
Since the operator $(1-\Delta)^\frac{\gamma}{2}$ can be written as a convolution, it commutes with $\varrho\,\ast$. 
In the case, $\varrho=\rho_\eps$ by Young's convolutional inequality and the fact $\|\rho_\eps\|_{L^1}=1$ one therefore gets $\|\rho_\eps\ast w\|_{H^\gamma}\leq \|w\|_{H^\gamma}$.
Combining this with \eqref{eq:krylov-type} we get
\begin{equ}
\sum_{n\in\Z}\|v \scal{n}^\gamma \rho_\eps\ast e^n\|_{H^{\mu-1}}^2\lesssim \|v\|_{H^\gamma}^2.
\end{equ}
The case $\varrho=\delta-\rho_\eps$ is slightly less obvious, but let us recall (see e.g. \cite[Prop.~4.10]{BGN}) that $\|(\delta-\rho_\eps)\ast w\|_{L^2}\lesssim \eps^\kappa\|w\|_{H^\kappa}$ for $\kappa\in(0,1)$. Smuggling in $(1-\Delta)^\frac{\gamma}{2}$ again, we get $\|(\delta-\rho_\eps)\ast w\|_{H^\gamma}\lesssim \eps^\kappa\|w\|_{H^{\gamma+\kappa}}$. So using \eqref{eq:krylov-type} once more (with $\gamma+\kappa$ in place of $\gamma$) we get 
\begin{equ}
\sum_{n\in\Z}\|v \scal{n}^\gamma (\delta-\rho_\eps)\ast e^n\|_{H^{\mu-1}}^2\lesssim \eps^\kappa\|v\|_{H^{\gamma+\kappa}}^2.
\end{equ}
Using these two bounds in \eqref{eq:approx-molli} we get
\begin{equs}
\|u_t-v^{(\eps)}_t\|_{H^{\mu-1}}^2\leq &\int_0^t-\|u_s-v^{(\eps)}_s\|_{H^\mu}^2\,ds+\int_0^tK\|u_s-v^{(\eps)}_s\|_{H^{\mu-1}}^2\,ds
\\
&\qquad+K\eps^{\kappa}\int_0^t\big\|u_s\big\|_{H^{\gamma+\kappa}}^2\,ds+M_t,
\end{equs}
and the proof is concluded by the same argument that followed \eqref{eq:approx-intermediate}.
\end{proof}

\subsection{Strong existence and uniqueness - $L^p$-theory}
We need an $L^p$ version of \eqref{eq:krylov-type}, extending \cite[Lem.~8.4]{K_Lp}.
Recall that $G^\gamma$ is defined by $(G^\gamma v)^n=\scal{n}^\gamma v e^n$.
\begin{lemma}\label{lem:Krylov-lemma}
Let $\gamma\in[0,2]$, $\mu<\frac{1}{2}-\gamma$, and $p\in[2,\infty)$. Then there exists a constant $C=C(\gamma,\mu)$ such that for any $u\in H^{\gamma,p}$ one has
\begin{equ}\label{eq:Krylov-Lpmain}
\big\|G^\gamma u\big\|_{H^{\mu-1,p}_{\ell^2}}\leq C \|u\|_{H^{\gamma,p}}.
\end{equ}
\end{lemma}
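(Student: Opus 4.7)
The plan is to mirror the $L^2$ computation in \eqref{eq:krylov-type}. Since the sum-swap step used there (via Parseval) has no direct $L^p$ analogue, I replace it with a commutator decomposition. Setting $T:=(1-\Delta)^{(\mu-1)/2}$ and using $Te^n=\scal{n}^{\mu-1}e^n$, for each $n$ one has
\begin{equ}
\scal{n}^\gamma T(u e^n) = \scal{n}^{\mu-1+\gamma}\, u\, e^n + \scal{n}^\gamma [T, u] e^n;
\end{equ}
I will call the first term on the right the diagonal and the second the commutator. The diagonal is straightforward: using $|e^n|\leq C$,
\begin{equ}
\Big\|\big(\scal{n}^{\mu-1+\gamma} u e^n\big)_n\Big\|_{L^p(\T;\ell^2)}\lesssim \|u\|_{L^p}\Big(\sum_n\scal{n}^{2(\mu-1+\gamma)}\Big)^{1/2}\lesssim \|u\|_{H^{\gamma,p}},
\end{equ}
the $\ell^2$-sum converging precisely because $\mu<\frac{1}{2}-\gamma$.

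For the commutator, realise $T$ as convolution with the Bessel-potential kernel $G_{1-\mu}$, so
\begin{equ}
[T,u]e^n(x) = \int_\T G_{1-\mu}(x-y)\big(u(y)-u(x)\big)e^n(y)\,dy = (\phi_x, e^n)_{L^2(\T_y)},
\end{equ}
with $\phi_x(y):=G_{1-\mu}(x-y)(u(y)-u(x))$. The very definition of the $H^\gamma$-norm in the basis $(e^n)$ yields the pointwise-in-$x$ identity $\sum_n \scal{n}^{2\gamma}\big|[T,u]e^n(x)\big|^2 = \|\phi_x\|_{H^\gamma(\T_y)}^2$, and so the commutator's contribution to the left-hand side of \eqref{eq:Krylov-Lpmain} is exactly $\big(\int_\T\|\phi_x\|_{H^\gamma}^p\,dx\big)^{1/p}$.

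It remains to bound this last quantity by $C\|u\|_{H^{\gamma,p}}$. For $\gamma\in(0,1/2)$ I plan to use the Slobodeckij equivalence $H^\gamma=W^{\gamma,2}$ from Proposition \ref{prop:Sobolev}(i), decompose the increment $\phi_x(y)-\phi_x(z)$ into a term carrying $u(y)-u(z)$ paired with $G_{1-\mu}(x-y)$ and a term carrying $u(z)-u(x)$ paired with $G_{1-\mu}(x-y)-G_{1-\mu}(x-z)$, then pass the outer $L^p_x$ through Minkowski (valid for $p\geq 2$) and use $|G_{1-\mu}(z)|\lesssim |z|^{-\mu}$ near zero. For $\gamma\in[1/2,2]$ I first strip off integer derivatives by integrating by parts against $e^n$, reducing to the fractional regime. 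Throughout, the hypothesis $\mu<\frac{1}{2}-\gamma$ is exactly what makes all the resulting double integrals converge — the same threshold as in the $L^2$ argument. The main obstacle will be the Slobodeckij bookkeeping: verifying that after the $L^p_x$-integration each of the two increment-type contributions yields a kernel of total singularity weaker than the critical one, which is where the strict form of the hypothesis is essential and which must be done carefully because the kernel-increment and $u$-increment terms have markedly different singularity structures.
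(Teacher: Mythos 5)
Your approach is genuinely different from the paper's. The paper never works with a single fixed $\gamma$: it proves the statement for $\gamma=0$ by exhibiting $\partial^{\beta_1}(1-\Delta)^{(\beta_2-1)/2}$ as convolution with a \emph{square-integrable} kernel $R$ (so that the same pointwise Parseval step you use for the commutator collapses $\sum_k$ and $\int_y$, after which a Young inequality on $R^2\ast u^2$ finishes), proves it for $\gamma=2$ by expanding $(1+k^2)ue^k=(1-\Delta)(ue^k)-((1+\Delta)u)e^k-2\partial(\partial u\,e^k)$ and invoking the $\gamma=0$ case three times, and then interpolates the one-parameter family of bounds in the $\alpha$ and $\beta$ indices of $\vn{\cdot}_{\alpha,0,\beta}$ and $\|\cdot\|_{H^{\gamma,p}}$ to cover all $\gamma\in(0,2)$. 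Your commutator decomposition $\scal{n}^\gamma T(ue^n)=\scal{n}^{\mu-1+\gamma}ue^n+\scal{n}^\gamma[T,u]e^n$ is a legitimate alternative, and the diagonal term is clean.

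However, I see two concrete gaps in the treatment of the commutator term. First, passing the outer $L^p_x$ through \emph{Minkowski} cannot work in the stated generality: on the term pairing $u(y)-u(z)$ with $G_{1-\mu}(x-y)$ it produces the factor $\|G_{1-\mu}\|_{L^p_x}$, which is finite only for $\mu<1/p$. When $p>2$ and $\gamma$ is small, the hypothesis $\mu<\tfrac12-\gamma$ permits $\mu\in[1/p,\tfrac12)$, for which this step fails. The remedy is the one the paper uses at the $\gamma=0$ level: observe that the $L^{p/2}_x$ norm of $\int|G_{1-\mu}(x-y)|^2 F(y)\,dy$ is a convolution of $|G_{1-\mu}|^2$ with $F$, and apply Young with the $L^1$-norm of $|G_{1-\mu}|^2$, i.e.\ only $\|G_{1-\mu}\|_{L^2}<\infty$, which is exactly $\mu<\tfrac12$. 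Second, even after this correction, the quantity $\|F\|_{L^{p/2}}^{1/2}=\big\|\,(\int|u(\cdot)-u(z)|^2|\cdot-z|^{-1-2\gamma}\,dz)^{1/2}\big\|_{L^p}$ that emerges is the Strichartz-type $L^p(\ell^2/L^2)$ characterization of the Bessel-potential space $H^{\gamma,p}=F^\gamma_{p,2}$; it is \emph{not} the Slobodeckij norm $W^{\gamma,p}$, which has $|u(y)-u(z)|^p$ in the numerator. Proposition~\ref{prop:Sobolev}(i) only gives the chain $W^{\alpha+\eps,p}\subset H^{\alpha,p}\subset W^{\alpha-\eps,p}$ and does not supply the $L^p(L^2)$ characterization you need, so an additional input has to be cited here. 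Finally, the reduction of $\gamma\in[\tfrac12,2]$ to the fractional regime by ``integrating by parts against $e^n$'' is only gestured at; the derivative that appears acts on $u$ and changes the required Sobolev scale, and it is not clear the bookkeeping closes without essentially reproducing the paper's Leibniz expansion. In short: the decomposition is a sound idea, but as written the Minkowski step is wrong in part of the parameter range, the wrong Sobolev characterization is invoked, and the high-$\gamma$ case is not actually reduced. The paper's route through $\gamma\in\{0,2\}$ plus interpolation sidesteps all three issues.
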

\begin{proof}
Instead of considering $G^\gamma$ to depend on $\gamma$, let us consider the single map
$G:\,u\mapsto (u e^k)_{k\in\Z}$
and encode the $\gamma$-dependence in the norms: for $\alpha\in\R$, $\beta_1\in\N$, $\beta_2\in\R$, and a sequence of functions $f=(f^k)_{k\in\Z}$ define
\begin{equ}
\vn{f}_{\alpha,\beta_1,\beta_2}=\Bigg(\int_{\T}\Big(\sum_{k\in\Z}(1+k^2)^{\frac{\alpha}{2}}\big|\partial^{\beta_1}(1-\Delta)^{\frac{\beta_2}{2}}f^k(x)\big|^2\Big)^{\frac{p}{2}}\,dx\Bigg)^{\frac{1}{p}}.
\end{equ}
Note that the claimed inequality \eqref{eq:Krylov-Lpmain} can be rewritten as
\begin{equ}\label{eq:Krylov-Lpmain2}
\vn{G u}_{\gamma,0,\mu-1}\lesssim \|u\|_{H^{\gamma,p}}.
\end{equ}
From \cite[Lem.~8.4]{K_Lp} one has for any $\beta_1,\beta_2$ such that $\beta_1+\beta_2<\frac{1}{2}$,
\begin{equ}\label{eq:Krylov-0}
\vn{G u}_{0,\beta_1,\beta_2-1}\lesssim \|u\|_{H^{0,p}}.
\end{equ}
For the reader's convenience let us give the proof of \eqref{eq:Krylov-0}. Recall that the operator $\partial^{\beta_1}(1-\Delta)^{\frac{\beta_2-1}{2}}$ can be written as a convolution with a function $R$ that is bounded by a constant times $(|x|^{-\beta_1-\beta_2}\vee 1)|\log (x\wedge\frac{1}{2})|$. In particular, under the assumption $\beta_1+\beta_2<\frac{1}{2}$, $R$ is square integrable on $\T$. Therefore by Parseval's identity and Young's convolutional inequality we have
\begin{equs}
\vn{G u}_{0,\beta_1,\beta_2-1}&=\Bigg(\int_{\T}\Big(\sum_{k\in\Z}\Big(\int_{\T}R(x-y)u(y)e^k(y)\,dy\Big)^2\Big)^{\frac{p}{2}}\,dx\Bigg)^{\frac{1}{p}}
\\
&=\Bigg(\int_{\T}\Big(\int_{\T}\big(R(x-y)u(y)\big)^2\,dy\Big)^{\frac{p}{2}}\,dx\Bigg)^{\frac{1}{p}}
\\
&\leq \big(\|R^2\|_{L^1}\|u^2\|_{L^\frac{p}{2}}\big)^{\frac{1}{2}}.\label{eq:222}
\end{equs}
This proves \eqref{eq:Krylov-0}.
Next note that
\begin{equs}
(1+k^2)ve^k=v(1-\Delta)e^k&=(1-\Delta)(v e^k)-\big((1-\Delta)v\big)e^k-2\partial v\partial e^k
\\
&=(1-\Delta)(v e^k)-\big((1+\Delta)v\big)e^k-2\partial (\partial v e^k).
\end{equs}
Therefore from the triangle inequality we get
\begin{equ}
\vn{G u}_{2,0,\mu-1}\leq \vn{G u}_{0,0,\mu+1}+\vn{G (1+\Delta) u}_{0,0,\mu-1}+2\vn{G\partial v}_{0,1,\mu-1}.
\end{equ}
Therefore applying \eqref{eq:Krylov-0}, we get that as long as $\mu+2<1/2$,
\begin{equ}\label{eq:Krylov-2}
\vn{G u}_{2,0,\mu-1}\lesssim \|u\|_{L^p}+\|\partial u\|_{L^p}+\|\Delta u\|_{L^p}\lesssim \|u\|_{H^{2,p}}.
\end{equ}
It follows from \cite[Sec.~1.15]{Triebel} that both of the norms $\vn{\cdot}_{\alpha,0,\beta}$ and $\|\cdot\|_{H^{\gamma,p}}$ interpolate in the natural way with respect to the parameters $\alpha,\beta,\gamma$. Therefore interpolating between \eqref{eq:Krylov-0} and \eqref{eq:Krylov-2} we get that for any $\gamma\in(0,2)$ and any $\mu$ that can be written as $\mu=\frac{\gamma}{2}\mu_1+(1-\frac{\gamma}{2})\mu_0$ with $\mu_1<-\frac{3}{2}$, $\mu_0<\frac{1}{2}$, the desired bound \eqref{eq:Krylov-Lpmain2} holds.
It remains to note that any $\mu<\frac{1}{2}-\gamma$ has this property, with choosing $\mu_0=\mu+\gamma$ and $\mu_1=\mu+\gamma-2$.
\end{proof}

\begin{proof}[Proof of Theorem \ref{thm:Lp}]
Fix some auxiliary exponents $\gamma',\mu',\nu$ such that $\gamma<\gamma'<\nu<\mu'<\mu$ and $\frac{\gamma'}{\frac{\gamma}{\mu}+\eps_1}<\nu$.
We then assume that $p$ is large enough so that $H^{\mu',p}\subset C^{\nu}$, which is possible by Proposition \ref{prop:Sobolev} (v), and that $\cH^{\mu,p}\subset L^p(\Omega;C([0,1];C^\nu))$, which is possible by \eqref{eq:Lp-embed2}.

Similarly to the variational approach, Lipschitz continuity of the diffusion coefficient will only hold locally, so we introduce a cutoff function: for all $M\geq 1$ let $\theta_M:\R\to [0,1]$ be a function that is constant $1$ on $\{|x|<M\}$, constant $0$ on $\{|x|>M+2\}$ and has Lipschitz constant at most $1$. Throughout the proof any dependendce of constants on $M$ will be explicitly stated, in particular the implicit constant in $\lesssim$ will not be allowed to depend on $M$.
Define $\Theta_M: C^{\nu}(\T)\to[0,1]$ by $\Theta_M(v)=\theta_M(\|v\|_{C^\nu})$.
 
We verify the conditions of \cite[Thm.~5.1]{K_Lp}, named Assumptions 5.1-5.6.
To reconcile the notation therein and herein, take $n+2=\mu$, $a=1$, $\sigma=0$, $f=0$, and $g(v)=\Theta_M(v)G^\gamma\sigma(v)$ (beware the clash of notation between the $f$ and $g$ here and in the setup of Section \ref{sec:Lp}, this clash also follows \cite{K_Lp}). Therefore Assumptions 5.1, 5.2., and 5.3 are trivially satisfied.

For Assumptions 5.4 and 5.5, one requires that for $v\in H^{\mu,p}$, $\Theta_M(v)G^\gamma \sigma(v)\in H^{\mu-1,p}_{\ell^2}$.
For this we note that $\Theta_M(v)$ is well-defined since $H^{\mu,p}\subset C^\nu$.
Then one bounds $\Theta_M(v)$ by $1$, use Lemma \ref{lem:Krylov-lemma}, and Proposition \ref{prop:Sobolev} (iii) (with $\beta=1$) to get 
\begin{equs}
\|\Theta_M(v)G^\gamma \sigma(v)\|_{H^{\mu-1,p}_{\ell^2}}&\leq\|G^\gamma \sigma(v)\|_{H^{\mu-1,p}_{\ell^2}}\lesssim\|\sigma(v)\|_{H^{\gamma,p}}\lesssim 1+\|v\|_{H^{\gamma',p}}.\label{eq:Lp-bound}
\end{equs}
Since $\gamma'<\mu$, this yields the required bound.

For Assumption 5.6, one requires that for all $\eps>0$ there exists $K(\eps,M)$ such that for any $v,w\in H^{\mu,p}$ one has
\begin{equ}
\Big\|\Theta_M(v)G^\gamma \sigma(v)-\Theta_M(w)G^\gamma \sigma(w)\Big\|_{H^{\mu-1,p}_{\ell^2}}\leq \eps \|v-w\|_{H^{\mu,p}}+K(\eps,M) \|v-w\|_{H^{\mu-2,p}}.
\end{equ}
By \eqref{eq:interpolationp} it suffices to show
\begin{equ}\label{eq:LpLipschitz0}
\Big\|\Theta_M(v)G^\gamma \sigma(v)-\Theta_M(w)G^\gamma \sigma(w)\Big\|_{H^{\mu-1,p}_{\ell^2}}\leq K(M) \|v-w\|_{H^{\mu',p}}
\end{equ}
with some $K(M)$. 
Without loss of generality, we may assume $\|v\|_{C^\nu}\leq \|w\|_{C^\nu}$.
First we write
\begin{equ}
\Big\|\Theta_M(v)G^\gamma \sigma(v)-\Theta_M(w)G^\gamma \sigma(v)\Big\|_{H^{\mu-1,p}_{\ell^2}}= 
|\Theta_M(v)-\Theta_M(w)|\|G^\gamma \sigma(v)\|_{H^{\mu-1,p}_{\ell^2}}\Theta_{M+2}(v).
\end{equ}
Since both $\theta_M$ and the norm function has Lipschitz norm $1$, using Lemma \ref{lem:Krylov-lemma} we can write
\begin{equs}
\Big\|\Theta_M(v)G^\gamma \sigma(v)-\Theta_M(w)G^\gamma \sigma(v)\Big\|_{H^{\mu-1,p}_{\ell^2}}
&\lesssim \|v-w\|_{C^{\nu}}(1+\|v\|_{H^{\gamma,p}})\Theta_{M+2}(v)
\\
&\lesssim M\|v-w\|_{H^{\mu',p}},\label{eq:LpLipschitz1}
\end{equs}
using Proposition \ref{prop:Sobolev} (ii) and (v) in the second inequality.
Next, recall that the inequality \eqref{eq:ineq-sigma} in particular implies that since $\sigma\in C^{1+\frac{\gamma}{\mu}+\eps_1}$, one has
\begin{equ}
\|\sigma(v)-\sigma(w)\|_{C^{\gamma'}}\lesssim \|v-w\|_{C^{\gamma'}}+\|v-w\|_{L^\infty}\|w\|^{\frac{\gamma}{\mu}+\eps_1}_{C^{\frac{\gamma'}{\frac{\gamma}{\mu}+\eps_1}}}.
\end{equ}
By assumption the last unappealing exponent is bounded by $\nu$.
Therefore by Lemma \ref{lem:Krylov-lemma}, Proposition \ref{prop:Sobolev} (ii) and (v)
\begin{equs}
\Big\|\Theta_M(w)G^\gamma \sigma(v)-\Theta_M(w)G^\gamma \sigma(w)\Big\|_{H^{\mu-1,p}_{\ell^2}}&\lesssim\Theta_M(w)\|\sigma(v)-\sigma(w)\|_{H^{\gamma,p}}
\\
&\lesssim\Theta_M(w)\|\sigma(v)-\sigma(w)\|_{C^{\gamma'}}
\\
&\lesssim\Theta_M(w)\|v-w\|_{C^{\gamma'}}(1+\|w\|_{C^\nu})
\\
&\lesssim M\|v-w\|_{H^{\mu',p}}.\label{eq:LpLipschitz2}
\end{equs}
The bounds \eqref{eq:LpLipschitz1} and \eqref{eq:LpLipschitz2} imply \eqref{eq:LpLipschitz0} and so  all the assumptions of \cite[Thm.~5.1]{K_Lp} are verified.
By \cite[Thm.~5.1]{K_Lp}, for all $M\geq 1$ we get a solution $u^{(M)}\in\cH^{\mu,p}$ to the equation
\begin{equ}\label{eq:Lp-M}
du^{(M)}=\Delta u^{(M)}\,dt+\Theta_M(u^{(M)})\sum_{n\in\Z}\sigma(u^{(M)})\scal{n}^\gamma e^n\,dW^n_t,
\end{equ}
with initial condition $\psi$. Moreover, the solution is unique up to any stopping time. It remains to remove the cutoff function $\Theta_M$.

By \cite[Thm.~2.1,~Lem.~6.6]{K_Lp} one has the a priori estimate, for any $T\in(0,1]$,
\begin{equs}
\|u^{(M)}\|_{\cH^{\mu,p}(T)}&\lesssim\|\psi\|_{H^{\mu-\frac{2}{p},p}}+\|\Theta_M(u^{(M)}) G^\gamma\sigma(u^{(M)})\|_{\bH^{\mu-1,p}_{\ell^2}(T)}
\\
&\lesssim \|\psi\|_{H^{\mu-\frac{2}{p},p}}+ 1+\|u^{(M)}\|_{\bH^{\gamma',p}(T)},
\end{equs}
using \eqref{eq:Lp-bound} again in the second inequality.
Let us use \eqref{eq:interpolationp} with $\beta=\gamma'$, $\delta=\mu-2$, $\alpha=\mu$.
Moreover we can replace $\|u\|_{\bH^{\mu,p}}$ by $\|\Delta u\|_{\bH^{\mu-2,p}}$ and absorb the remainder in $\|u\|_{\bH^{\mu-2,p}}$.
Recalling that $\|\Delta u\|_{\bH^{\mu-2,p}(T)}$ is part of the $\cH^{\mu,p}$ norm, we can choose $\eps$ small enough so that this term is absorbed on the left-hand side. We end up with
\begin{equs}
\|u^{(M)}\|_{\cH^{\mu,p}(T)}^p&\lesssim \|\psi\|_{H^{\mu-\frac{2}{p},p}}^p+1+\|u^{(M)}\|_{\bH^{\mu-2,p}(T)}^p
\\
&=\|\psi\|_{H^{\mu-\frac{2}{p},p}}^p+1+\E\int_0^T\|u^{(M)}_t\|_{H^{\mu-2,p}}^p\,dt.\label{eq:Gron1}
\end{equs}
By \eqref{eq:Lp-embed1} one has $\E \|u_T\|_{H^{\mu-2,p}}^p\lesssim \|u^{(M)}\|_{\cH^{\mu,p}(T)}^p$, and so one can apply Gronwall's lemma to the function $T\mapsto\E\|u^{(M)}_T\|_{H^{\mu-2,p}}^p$. Substituting back the resulting estimate in \eqref{eq:Gron1} yields in the uniform in $M$ bound
\begin{equ}\label{eq:apriori-M}
\|u^{(M)}\|_{\cH^{\mu,p}}\lesssim 1+\|\psi\|_{H^{\mu-\frac{2}{p},p}}.
\end{equ}
Thanks to the choice of the exponents we get
\begin{equ}\label{eq:apriori-Holder-M}
\E\|u^{(M)}\|_{C([0,1];C^\nu(\T))}^p\lesssim 1+\|\psi\|_{H^{\mu-\frac{2}{p},p}}^p.
\end{equ}
Define $\tau_{N,M}=\inf\{t:\|u^{(N)}_t\|_{C^\nu}\geq M\}\wedge1$. Note that for all $N\geq M$, $u^{(N)}$ solves \eqref{eq:Lp-M} up to $\tau_{N,M}$, and therefore has to coincide with $u^{(M)}$ there. Thus, $\tau_{N,M}=\tau_{M,M}$ for all $N\geq M$ and on each stochastic interval $[0,\tau_{M,M}]$, the sequence $(u^{(N)})_{N\in\N}$ is eventually constant and the limit process solves \eqref{eq:SHE} on $[0,\tau_{M,M}]$. Since from \eqref{eq:apriori-Holder-M} it follows that $\mathbb{P}(\tau_{M,M}<1)\to 0$, we get that there exists an a.s. limit $u$ in $C([0,1];C^\nu(\T))$. Passing to $M\to\infty$ in \eqref{eq:apriori-M} yields that $u$ satisfies \eqref{eq:apriori-Lp} and in particular $u\in\cH^{\mu,p}$, and finally $u$ is a solution on $\cup_M[0,\tau_{M,M}]=[0,1]$.
\end{proof}

\begin{remark}\label{rem:apriori-Lp-eps}
Morally it is clear that the same bound \eqref{eq:apriori-Lp} also holds for $v^{(\eps)}$ defined in Corollary \ref{cor:strong-approx}, provided $\psi\in H^{\mu-\frac{2}{p},p}$. To see this more precisely, the only property used from the diffusion coefficient is \eqref{eq:Krylov-Lpmain}, so one has to verify it for $G^{\gamma,(\eps)}$ defined by
$(G^{\gamma,(\eps)} u)^n=\scal{n}^\gamma u e^n_\eps$.
Inspecting the proof of Lemma \ref{lem:Krylov-lemma}, one needs to verify \eqref{eq:Krylov-0} with $G^{(\eps)}:\,u\mapsto (u e^k_\eps)_{k\in\Z}$ in place of $G$. With $R$ as therein (of which we only need that it is square integrable) we have
\begin{equs}
\vn{G^{(\eps)} u}_{0,\beta_1,\beta_2-1}&=\Bigg(\int_{\T}\Big(\sum_{k\in\Z}\Big(\int_{\T}R(x-y)u(y)e^k_\eps(y)\,dy\Big)^2\Big)^{\frac{p}{2}}\,dx\Bigg)^{\frac{1}{p}}
\\
&=\Bigg(\int_{\T}\Big(\sum_{k\in\Z}\Big(\int_{\T}\int_{\T}\rho_\eps(y-z)R(x-z)u(z)\,dz e^k(y)\,dy\Big)^2\Big)^{\frac{p}{2}}\,dx\Bigg)^{\frac{1}{p}}
\\
&=\Bigg(\int_{\T}\Big(\int_{\T}\Big(\int_\T\rho_\eps(y-z)R(x-z)u(z)\,dz\Big)^2\,dy\Big)^{\frac{p}{2}}\,dx\Bigg)^{\frac{1}{p}}.
\end{equs}
Applying first Young's convolutional inequality to the middle integral and using $\|\rho_\eps\|_{L^1}=1$, we can finish the proof of \eqref{eq:Krylov-0} as in \eqref{eq:222}, the rest of the argument then remains unchanged.
\end{remark}

\subsection{Wong-Zakai approximation}

\begin{proof}[Proof of Theorem \ref{thm:WZ}]
To ease notation, throughout the proof the summation over $n\in\Z$ is implicitly implied.
To separate the spatial and temporal scales, let $\xi^{\delta,\eps}=(\rho^\delta\otimes\rho^\eps)\ast\xi$ and denote by $u^{(\delta,\eps)}$ the solution to the random PDE
\begin{equ}
\partial_t v^{(\delta,\eps)}=\Delta v^{(\delta,\eps)}-\tfrac{1}{2}\|\rho^\eps\|_{H^\gamma}^2v^{(\delta,\eps)}+v^{(\delta,\eps)} D^\gamma\xi^{\delta,\eps}.
\end{equ}
Note that $D^\gamma\xi^{\delta,\eps}(t,x)=\nosum\scal{n}^\gamma e^n_\eps(x)\partial W^{\delta,n}_t$, where $W^{\delta,n}=\rho^\delta\ast W^n$.
Since Corollary \ref{cor:strong-approx} provides a bound for $u-v^{(\eps)}$, the main task is to bound the difference between $v^{(\eps)}$ and $v^{(\delta,\eps)}$. Since they are both spatially smooth processes for $t>0$ and continuous at $t=0$, the Feynman-Kac formula applies.
That is, taking $\hat W$ be $\sqrt{2}$ times a Brownian motion on $\T$ independent from $(W^n)_{n\in\Z}$ and denoting by $\hat\E$ the expectation with respect to $\hat W$, for any $(t,x)\in[0,1]\times\T$ we have
\begin{equ}
v^{(\delta,\eps)}_t(x)=\hat\E\bigg(\psi(x+\hat W_{t})\exp\Big(\nosum\int_0^t\scal{n}^\gamma \d W_s^{\delta,n} e^{n}_\eps(x+\hat W_{t-s})\,ds-\tfrac{1}{2}\|\rho^\eps\|_{H^\gamma}^2 t\Big)\bigg)
\end{equ}
as well as
\begin{equ}
v^{(\eps)}_t(x)=\hat\E\bigg(\psi(x+\hat W_{t})\exp\Big(\nosum\int_0^t \scal{n}^\gamma e^{n}_\eps(x+\hat W_{t-s})\,dW^n_s-\tfrac{1}{2}\int_0^t\scal{n}^{2\gamma} |e^{n}_\eps(x+\hat W_{t-s})|^2\,ds \Big)\bigg).
\end{equ}
First of all, notice that the two subtracted quantities are the same:
\begin{equs}
\nosum\scal{n}^{2\gamma}|e^{n}_\eps(x+\hat W_{t-s})|^2&=\nosum\scal{n}^{2\gamma}(\rho^\eps(x+\hat W_{t-s}-\cdot),e^n)_{L^2}^2\\
&=\|\rho^\eps(x+\hat W_{t-s}-\cdot)\|_{H^\gamma}^2=\|\rho^\eps\|_{H^\gamma}^2.
\end{equs}
We can therefore write 

\begin{equs}
\,&\E| v^{(\eps)}_t(x)-v^{(\delta,\eps)}_t(x)|
\\&=\E\Big| v^{(\eps)}_t(x)
\\
&\,\times\hat \E\Big(1-\exp\Big(\nosum\int_0^t  \scal{n}^\gamma e^{n}_\eps(x+\hat W_{t-s})\d W_s^{\delta,n}\,ds-\int_0^t  \scal{n}^\gamma e^{n}_\eps(x+\hat W_{t-s})\,dW^n_s\Big)\Big)\Big|
\\
&\leq \big(\E\| v^{(\eps)}\|^2_{C([0,1]\times\T)}\big)^{\frac{1}{2}}
\\ &\,\times\Big(\tilde \E\Big|1-\exp\Big(\nosum\int_0^t   \scal{n}^\gamma e^{n}_\eps(x+\hat W_{t-s})\d W_s^{\delta,n}\,ds-\int_0^t  \scal{n}^\gamma e^{n}_\eps(x+\hat W_{t-s})\,dW^n_s\Big)\Big|^2\Big)^{\frac{1}{2}}.
\end{equs}
Here and below $\tilde \E$ denotes $\E\hat\E$.
By Remark \ref{rem:apriori-Lp-eps} we have
$\E\| u^{(\eps)}\|^2_{C_t C_x}\lesssim 1$
and therefore
\begin{equs}
\,&\E| v^{(\eps)}_t(x)-v^{(\delta,\eps)}_t(x)|
\\&\,\lesssim\Big(\tilde \E\Big|1-\exp\Big(\nosum\int_0^t  \scal{n}^\gamma e^{n}_\eps(x+\hat W_{t-s})\d W_s^{\delta,n}\,ds-\int_0^t \scal{n}^\gamma e^{n}_\eps(x+\hat W_{t-s})\,dW^n_s\Big)\Big|^2\Big)^{\frac{1}{2}}.
\end{equs}
As a function of $s$, $e^n_{\eps}(x+\hat W_{t-s})$ is a (backward) diffusion process that is independent of $(W^n)_{n\in\Z}$. Therefore we can integrate by parts 
\begin{equs}
\int_0^t  & \scal{n}^\gamma e_{\eps}^n(x+\hat W_{t-s})\d W_s^{\delta,n}\,ds-\int_0^t \scal{n}^\gamma e_{\eps}^n(x+\hat W_{t-s})\,dW^n_s
\\
&=\int_0^t  \scal{n}^\gamma (W^n_s-W^{\delta,n}_s)\,de_{\eps}^n(x+\hat W_{t-s})
\\
&\quad- \scal{n}^\gamma(W^n_t-W^{\delta,n}_t)e^n_{\eps}(x)+ \scal{n}^\gamma(W^n_0-W^{\delta,n}_0)e^{n}_\eps(x+\hat W_t)
\\
&=\int_0^t \scal{n}^\gamma (W^n_s-W^{\delta,n}_s)\d e^{n}_\eps(x+\hat W_{t-s}) \,d\hat W_{t-s}
\\
&\quad+\Bigg(\frac{1}{2}\int_0^t \scal{n}^\gamma (W^n_s-W^{\delta,n}_s)\d^2 e^{n}_{\eps}(x+\hat W_{t-s}) \,ds
\\&\qquad- \scal{n}^\gamma(W^n_t-W^{\delta,n}_t)e^{n}_{\eps}(x)+ \scal{n}^\gamma(W^n_0-W^{\delta,n}_0)e^{n}_{\eps}(x+\hat W_t)\Bigg)
\\&=:I_1+I_2.
\end{equs}
Thus,
\begin{equs}
\big(\E| v^{(\eps)}_t(x)-v^{(\delta,\eps)}_t(x)|\big)^2&\lesssim \tilde \E\big|1-\exp(I_1+I_2)\big|^2
\\
& =2(1-\tilde \E\exp(I_1+I_2))+(\tilde\E\exp(2I_1+2I_2)-1).\label{eq:rrr}
\end{equs}
Also let us denote 
\begin{equ}
I_{3}=\int_0^t\Big( \scal{n}^\gamma(W^n_s-W^{\delta,n}_s)\partial e^{n}_\eps(x+\hat W_{t-s})\Big)^2\,ds
\end{equ}
and make a couple of simple observations:

\begin{enumerate}[(a)]
\item By Novikov's condition, for all $\lambda \in \R$, if $\tilde\E\exp (\frac{\lambda^2}{2} I_{3})<\infty$, then  $\tilde \E \exp(\lambda I_1-\frac{\lambda^2}{2}I_{3})=1$.
\item For any $\kappa_1,>0$, the sequence $W:=(W^n)_{n\in\Z}$ is a Gaussian random variable in a separable space with the norm
\begin{equ}
\,[f]:=\sup_{n\in\Z} \scal{n}^{-\kappa_1}\|f^n\|_{C^{\frac{1}{2}-\kappa_1}}.
\end{equ}
By Fernique's theorem there exists an $\alpha>0$ such that $\E \exp (\alpha[W]^2)<\infty$.
\item For any $\kappa_2>0$,
\begin{equs}
|e^n_{\eps}(x)|&=(\rho_\eps(x-\cdot),e^n)_{L^2}=\big((1-\Delta)^{\frac{1+\kappa_2}{2}}\rho_\eps(x-\cdot),(1-\Delta)^{-\frac{1+\kappa_2}{2}}e^n\big)_{L^2}
\\
&\lesssim \scal{n}^{-1-\kappa_2}\eps^{-1-\kappa_2},
\end{equs}
and similarly, $|\d e^n_{\eps}(x)|\lesssim \scal{n}^{-1-\kappa_2}\eps^{-2-\kappa_2}$, $|\d^2e^n_{\eps}(x)|\lesssim \scal{n}^{-1-\kappa_2}\eps^{-3-\kappa_2}$.
\end{enumerate}
Taking $\kappa_1\in(0,\frac{1}{2})$ arbitrary and $\kappa_2>\kappa_1+\gamma$, (c) implies
\begin{equs}
|I_2|\lesssim [W]\delta^{\frac{1}{2}-\kappa_1}\eps^{-3-\kappa_2},\qquad
|I_3|\lesssim[W]^2 \delta^{1-2\kappa_1}\eps^{-6-2\kappa_2}.
\end{equs}
Now fix $A$ such that $A(1-2\kappa_1)>6+2\kappa_2$ and in the sequel we assume $\delta\leq \eps^{A}$. Therefore by (b), for all small enough $\eps>0$, $\exp(8 I_3)<\infty$. 
So for any $\lambda\in[-2,2]$ we can use (a) to write
\begin{equs}
\tilde \E\exp(\lambda I_1+\lambda I_2)&=\tilde \E\exp(\lambda I_1-\lambda^2 I_3+\lambda^2 I_3+\lambda I_2)
\\
&\leq\big(\tilde\E\exp(2\lambda I_1-2\lambda^2 I_3)\big)^{\frac{1}{2}}\big(\tilde\E\exp(2\lambda I_2+2\lambda^2 I_3)\big)^{\frac{1}{2}}
\\
&=\big(\tilde\E\exp(2\lambda I_2+2\lambda^2 I_3)\big)^{\frac{1}{2}}
\\
&\leq\big(\tilde\E\exp(\delta^{\frac{1}{2}-\kappa_1}\eps^{-3-\kappa_2}K(1+[W]^2)\big)^{\frac{1}{2}}.
\end{equs}
Here and below $K$ denotes some constant independent of $\delta$ and $\eps$ that may change from line to line.
Using the elementary inequality $a\leq 1+b a^{\frac{1}{b}}$, which holds for all $a\geq 0$, $b\in(0,1]$, and the exponential integrability of $[W]^2$, we can conclude 
\begin{equ}\label{eq:exp1}
\tilde \E\exp(\lambda I_1+\lambda I_2)\leq 1+K\delta^{\frac{1}{2}-\kappa_1}\eps^{-3-\kappa_2}.
\end{equ}
Therefore by Jensen's inequality
\begin{equs}
-\tilde \E\exp(\lambda I_1+\lambda I_2)\leq-\big(\tilde \E\exp(-\lambda I_1-\lambda I_2)\big)^{-1}&\leq -(1+K\delta^{\frac{1}{2}-\kappa_1}\eps^{-3-\kappa_2})^{-1}
\\&\leq -1+K\delta^{\frac{1}{2}-\kappa_1}\eps^{-3-\kappa_2}.\label{eq:exp2}
\end{equs}
Using \eqref{eq:exp1} with $\lambda=2$ and \eqref{eq:exp2} with $\lambda=-1$ in \eqref{eq:rrr} we get
\begin{equ}
\E| v^{(\eps)}_t(x)-v^{(\delta,\eps)}_t(x)|\lesssim \big(\delta^{\frac{1}{2}-\kappa_1}\eps^{-3-\kappa_2}\big)^{\frac{1}{2}},
\end{equ}
which, coupled with Corollary \ref{cor:strong-approx} finishes the proof.
\end{proof}
\begin{remark}
Since all the estimates are quantitative, a (tiny) convergence rate can also be obtained from the proof.
\end{remark}


\textbf{Acknowledgments}

MG is funded by the European Union (ERC, SPDE, 101117125). Views and opinions expressed
are however those of the author(s) only and do not necessarily reflect those of the European Union
or the European Research Council Executive Agency. Neither the European Union nor the granting
authority can be held responsible for them.


\bibliographystyle{Martin}
\bibliography{ito-frac} 
\end{document}